\documentclass[reqno, 11pt]{amsart}
\usepackage[utf8]{inputenc}
\usepackage{url}
\usepackage[colorlinks, linkcolor=blue, citecolor=blue, urlcolor=blue]{hyperref}
\usepackage{times}

\newtheorem{theorem}{Theorem}[section]
\newtheorem{lemma}[theorem]{Lemma}

\newtheorem{proposition}[theorem]{Proposition}

\theoremstyle{definition}

\newtheorem{remark}[theorem]{Remark}

\numberwithin{equation}{section}

\usepackage{bbm}
\usepackage{url}
\usepackage{euscript}
\usepackage{amsmath}
\usepackage{amsthm}

\usepackage{amsxtra}
\usepackage{amssymb}
\usepackage{pifont}
\usepackage{amsbsy}

\usepackage{graphicx}
\usepackage{epstopdf}

\usepackage{tikz}
\usepackage{pgfplots}
\pgfplotsset{compat=1.18}
\usetikzlibrary{arrows.meta}

\usepackage[backend=biber,style=numeric,doi=true,url=false,eprint=true]{biblatex}
\DeclareFieldFormat{title}{\mkbibemph{#1}}
\DeclareFieldFormat[article]{title}{\mkbibemph{#1}}
\DeclareFieldFormat[misc]{title}{\mkbibemph{#1}}

\DeclareFieldFormat{date}{#1}
\DeclareFieldFormat{year}{#1}
\DeclareFieldFormat[article]{issue}{#1}

\renewbibmacro*{date}{%
  \printtext[parens]{\printdate}%
}

\renewbibmacro*{issue+date}{%
  \printtext[parens]{\printdate}%
  \newunit}

\newskip\aline \newskip\halfaline
\title{Low Regularity Norm Inflation of Axisymmetric Hall Magnetohydrodynamics in Sobolev Space}

\author{Haoming Zhu}
\address{Haoming Zhu: Department of Mathematics, Statistics, and Computer Science, University of Illinois Chicago, Chicago, IL 60607, USA}
\email{hzhu54@uic.edu}

\begin{document}

\begin{abstract}
In this paper, we study the axisymmetric Hall magnetohydrodynamics where $u = u^re_r+u^ze_z$ and $B = B^\theta e_\theta$. If there is a smooth solution to the system, we proved that this solution must exhibit norm inflation in $H^{\sigma-1}(\mathbb R^3) \times H^{\sigma}(\mathbb R^3)$ for $1<\sigma<7/2$. If the smooth solution to Hall magnetohydrodynamics is unique, then this also implies a norm inflation in Hall magnetohydrodynamics.
\end{abstract}

\maketitle

\section{Introduction}
We consider the inviscid Hall magnetohydrodynamics (MHD) system in $\mathbb R^3$
\begin{equation}\label{eq:hallMHD}
\begin{cases}
u_t + ( {u} \cdot \nabla) {u} - (\nabla \times  {B}) \times  {B} + \nabla p = 0\\
B_t - \nabla \times ( {u} \times  {B}) +  \nabla \times ((\nabla \times  {B}) \times  {B}) =  {0} \\
\nabla \cdot  {u} = \nabla \cdot  {B} = 0.
\end{cases}
\end{equation}
whose fractional dissipation generalization is
\begin{equation}\label{eq:hallMHD-frac}
\begin{cases}
u_t + ( {u} \cdot \nabla) {u} - (\nabla \times  {B}) \times  {B} + \nabla p+ \eta(-\Delta)^\beta  {u} = 0\\
B_t - \nabla \times ( {u} \times  {B}) +  \nabla \times ((\nabla \times  {B}) \times  {B}) + \mu(-\Delta)^\alpha  {B} =  {0} \\
\nabla \cdot  {u} = \nabla \cdot  {B} = 0.
\end{cases}
\end{equation}
Hall MHD is of interest both in physics and mathematics. In physics derivation (e.g., \cite{Acheritogaray2011} \cite{Lighthill1960}), Hall term represent a correcting term  due to the Hall effect and accounts for phenomena like magnetic reconnection (e.g., \cite{Forbes1991}). Mathematically, singularity of the Hall term $\nabla \times ((\nabla\times B)\times B)$, which has one more derivative compared to the Euler nonlinear term $u \cdot \nabla u$ thus is harder to control.

In fact, when there is no magnetic dissipation (i.e. $\mu = 0$), the Hall term indeed is a loss of derivative even if $\beta = 1$. A striking fact shown by Jeong and Oh \cite{jeongoh3Dmhd} \cite{jeongoh2.5Dmhd} is that \eqref{eq:hallMHD-frac} exhibits norm inflation of $B$ in $H^s(\mathbb R^3)$ for \textit{any} $s > 7/2$. Jeong and Oh \cite{jeongoh2.5Dmhd} also established norm inflation of $B$ in $H^s$, when $s \ge 3$, $0 \le \alpha < 1/2$, and $1 \le \beta < 3/2$, for \eqref{eq:hallMHD-frac} in space $\mathbb R^k \times \mathbb T^{3-k}$ where $k \ge 1$. If $\mu > 0$, we could get some local well posedness results given large $\alpha>0$. Chae, Wan, and Wu \cite{Chae-Wan-Wu} showed that when $\eta=0$, $\mu > 0$, and $\alpha>1/2$, \eqref{eq:hallMHD} is locally well posed in $H^s(\mathbb R^d)$ for $s > 1 + d/2$. Later Dai \cite{daiLWP} showed local well posedness in $H^s(\mathbb R^d)$ for $s > 2-2\alpha + d/2$ when $\eta,\mu >0$, $\beta =1$, and $\alpha>1/2$. Due to the singularity of Hall term which competes with smoothing of dissipation, the local well posedness or ill posedness when $\alpha = 1/2$ remains an open problem, although there is a global well posedness result given small data by Zhang and Zhao \cite{ZhangZhao}. For inviscid (i.e. \eqref{eq:hallMHD}) local well posedness, which is even more difficult, it was shown by Jeong, Kim, Lee \cite{axisym-well-blow} that, roughly speaking, in the zero velocity case axisymmetric \eqref{eq:hallMHD} is locally well posed for $B^\theta/r$ in $H^{s}(\mathbb R^3)$ when $s > 5/2$, but singularity forms in a finite time.

In this work, we focus on the norm inflation of Hall MHD \eqref{eq:hallMHD}, aiming at giving a complement to Jeong and Oh \cite{jeongoh3Dmhd} in low regularity. However, a error was found in the proof during preparation so we have to restrict ourselves to axisymmetric Hall MHD (see, e.g. \cite{axisym-well-blow}), which is a reduction of \eqref{eq:hallMHD} in the axisymmetry $u(t,r,z) = u^r(t,r,z)e_{r} + u^z(t,r,z)e_z$ and $B(t,r,z) = B^\theta(t,r,z)e_\theta$.
\begin{equation}\label{pde:axisHallMHD}
\begin{split}
&\partial_t u^r + (u^r \partial_r + u^z \partial_z) u^r  + \partial_r p = -\frac{(B^\theta)^2}{r}, \\[1em]
&\partial_t u^z + (u^r \partial_r + u^z \partial_z) u^z + \partial_z p = 0, \\[1em]
&\partial_r u^r + \frac{u^r}{r} + \partial_z u^z = 0, \\[1em]
&\partial_t B^\theta + (u^r \partial_r + u^z \partial_z) B^\theta - \frac{u^r B^\theta}{r} = \partial_z \left( \frac{(B^\theta)^2}{r} \right).
\end{split}
\end{equation}
If we could prove norm inflation of \eqref{pde:axisHallMHD} and the solution to \eqref{eq:hallMHD} is \textit{unique}, then at the same time we obtain the norm inflation of \eqref{eq:hallMHD}. However, we remark that the local well posedness of \eqref{eq:hallMHD} seems not plausible due to \cite{jeongoh3Dmhd}, thus norm inflation of \eqref{pde:axisHallMHD} may not be easily inherited by 3D Hall MHD \eqref{eq:hallMHD}. Our main theorem is as follows.

\begin{theorem}\label{thm:axis}
For $1<\sigma<7/2$ and any small $\epsilon > 0$, $T_{*} > 0$, we can find small time $t_{*}< T_*$ and initial data $(u_{0},B_{0})$ in $C_{c}^\infty (\mathbb R^3)$ such that
\[
\lVert u_{0} \rVert_{H^{\sigma-1}} + \lVert B_{0} \rVert_{H^{\sigma}} \lesssim \epsilon.   
\]
If there exists an axisymmetric solution $(u,B)$ to \eqref{pde:axisHallMHD}, then
\[
\lVert u(t) \rVert_{H^{\sigma-1}} \lesssim  \epsilon, \quad \forall t \in [0,t_{*}],
\]
and
\[
\lVert B(t_{*}) \rVert_{H^\sigma} \gtrsim \epsilon^{1-\sigma}.
\]
\end{theorem}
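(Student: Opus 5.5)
We follow the strategy of Jeong and Oh: isolate the Hall term as a Burgers-type transport acting on $B^\theta/r$, and show that over a short time it converts a mild $z$-gradient into a large $H^\sigma$ norm. Write $\Theta := B^\theta/r$. From the last equation of \eqref{pde:axisHallMHD} and the incompressibility condition, $\Theta$ satisfies
\[
\partial_t \Theta + (u^r\partial_r + u^z\partial_z)\Theta - 2\Theta\,\partial_z\Theta\cdot(\text{lower order}) = \ldots ,
\]
and more precisely $\partial_t B^\theta + u\cdot\nabla B^\theta - \frac{u^r}{r}B^\theta = 2\Theta\,\partial_z B^\theta$. Thus $B^\theta$ is transported by the effective vertical velocity $-2\Theta$ (plus $u$) and stretched by $u^r/r$. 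The mechanism is that $-2\Theta$ is not constant in $r$: distinct radial layers of $B^\theta$ are sheared in $z$ at different speeds, and this shear generates large $z$-derivatives.

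\textbf{Construction of data.} Take $u_0 = 0$, or a small smooth field, and
\[
B_0 = \epsilon\,\lambda^{-\sigma+3/2}\,\phi(r,z)\,\cos(\lambda z)\,e_\theta ,
\]
superposed on a background profile $\epsilon\,\psi(r)\,e_\theta$ with $\psi(r)\sim r\,g(r)$ chosen so that $\Theta_{\mathrm{bg}}=\epsilon g(r)$ has $g'\neq 0$ on the support of $\phi$. The profiles $\phi,\psi$ are smooth, compactly supported, and vanish near the axis, so everything lies in $C_c^\infty(\mathbb R^3)$. The high-frequency part then has $H^\sigma$ norm $\approx\epsilon$. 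The background shear velocity $-2\epsilon g(r)$ transforms the phase $\lambda z$ into $\lambda\bigl(z+2\epsilon g(r)t\bigr)$. This produces a radial wavenumber $2\lambda\epsilon g'(r)t$, so $\|\partial_r^\sigma B\|_{L^2}$ grows by the factor $(1+\epsilon t)^\sigma$. To obtain $\epsilon^{1-\sigma}$ we instead choose the low-frequency part to dominate: take $\lambda$ moderate and $t_*$ with $\epsilon t_* \lambda \sim \epsilon^{-1}$, and let the initial perturbation sit at scale $\ell$ with $\epsilon\ell^{\sigma-3/2}$ normalization. We tune $(\lambda,t_*)$ so that the generated norm behaves like $\epsilon\cdot(\lambda t_*\epsilon)^{\sigma}\cdot\lambda^{-\sigma}\cdots\gtrsim\epsilon^{1-\sigma}$, with $t_*\to0$ as $\lambda\to\infty$. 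The restriction $\sigma<7/2$ enters here: the background must have $H^\sigma$ norm $\lesssim\epsilon$, and its Lipschitz norm, which controls the shear, must still be large relative to $t_*^{-1}$. Scaling a localized background $\psi$ to size $\delta$ gives $\|\nabla\Theta\|_\infty\sim\epsilon\,\delta^{\sigma-7/2}$, which blows up as $\delta\to0$ exactly when $\sigma<7/2$. We therefore take a concentrated background with $\delta$ small, so that the shear time $t_*\sim\delta^{7/2-\sigma}\cdot\epsilon^{-1}\cdots$ is less than $T_*$.

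\textbf{Estimates.} (i) \emph{Velocity bound.} The forcing in the $u$ equation is $-(B^\theta)^2/r$, which is quadratic. An energy estimate in $H^{\sigma-1}$ over $[0,t_*]$ gives $\|u\|_{H^{\sigma-1}}\lesssim\epsilon+t_*\|B^2/r\|_{H^{\sigma-1}}$, and this is $\lesssim\epsilon$ provided $t_*$ is small compared with the growth of $B$. We use an a priori bound on $\|B\|_{L^\infty\cap \dot W^{1,\infty}}$ obtained from the Lagrangian (characteristic) structure of the $B^\theta$ equation: $B^\theta/r$ satisfies a transport equation along the characteristics of $u-2\Theta e_z$, so $\|\Theta\|_\infty$ is conserved. (ii) \emph{Approximate solution.} We set $B_{\mathrm{app}}$ equal to the transported wave along the characteristics of the background shear and estimate the error $B-B_{\mathrm{app}}$ in $L^2$ by an energy method. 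Here the Hall term's loss of derivative is harmless at the $L^2$ level, because $\int 2\Theta\,\partial_z w\,w = -\int \partial_z\Theta\,w^2$. Interpolating the $L^2$ error against a crude $H^{N}$ upper bound then yields the lower bound on $\|B(t_*)\|_{H^\sigma}$.

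The main obstacle is step (ii): controlling $B-B_{\mathrm{app}}$ without any $H^\sigma$ well-posedness, using only $L^2$ energy and the conservation of $\Theta$ along characteristics. I would try a weighted $L^2$ estimate in which the commutator with $u^r/r$ is absorbed using the smallness of $t_*\|\nabla u\|_\infty$.
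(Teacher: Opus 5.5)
\textbf{Verdict: genuine gap.} Your mechanism is different from the paper's: you shear a high-frequency wave by the Hall drift $-2\Theta e_z$, whereas the paper shears it with the fluid. As written, though, the construction already fails before any estimate is made.

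\textbf{The background is not steady.} The drift is not an external field, because $\Theta$ is transported by it: $\partial_t\Theta+(u-2\Theta e_z)\cdot\nabla\Theta=0$. With $u$ negligible, as in your setup, each line of constant $r$ evolves by inviscid Burgers, $\partial_t\Theta-2\Theta\partial_z\Theta=0$.
\begin{itemize}
\item The only steady backgrounds are functions of $r$ alone, like your $\epsilon\psi(r)e_\theta$, and these are not in $L^2(\mathbb R^3)$.
\item Once you cut off in $z$, $\sup\partial_z\Theta_{bg}>0$, and the background forms a gradient singularity at time about $1/(2\sup\partial_z\Theta_{bg})$.
\item Raising the wave's $H^\sigma$ norm from $\epsilon$ to $\epsilon^{1-\sigma}$ needs accumulated strain $t_*\|\partial_r\Theta_{bg}\|_{L^\infty}\gtrsim\epsilon^{-1}$. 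Your own factor $(1+\epsilon t)^\sigma$ says this, and in the paper it is $t_*\|\nabla u_0\|_{L^\infty}\simeq\epsilon^{-1}$.
\item Your background is concentrated at a single scale $\delta$ in both $r$ and $z$. It therefore breaks down at a time of order $\epsilon t_*$, long before inflation.
\end{itemize}
So the ``wave transported along the background shear'' is not an approximate solution on $[0,t_*]$. No energy estimate can close either, since every Gr\"onwall factor contains $\int_0^t\|\nabla(u-2\Theta e_z)\|_{L^\infty}\,ds$. A repair would need a background elongated in $z$ with aspect ratio $\gg\epsilon^{-1}$, with the $H^\sigma$ budget and all estimates redone for it.

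\textbf{The scaling is internally inconsistent.}
\begin{itemize}
\item The bound $\|\nabla\Theta_{bg}\|_{L^\infty}\sim\epsilon\delta^{\sigma-7/2}$ holds only if the background lies within distance $\sim\delta$ of the symmetry axis, so that $1/r\sim\delta^{-1}$ and the support has volume $\sim\delta^3$.
\item If instead the profiles vanish near the axis with $r\sim 1$ on the support, as you state, the support is a thin torus of volume $\sim\delta^2$. Then $\|\nabla\Theta_{bg}\|_{L^\infty}\sim\epsilon\delta^{\sigma-2}$, which only covers $\sigma<2$.
\item Likewise, $\epsilon\lambda^{3/2-\sigma}\phi(r,z)\cos(\lambda z)$ with unit-scale $\phi$ has $H^\sigma$ norm $\sim\epsilon\lambda^{3/2}$, not $\epsilon$.
\item Conservation of $\|\Theta\|_{L^\infty}$ gives no $\dot W^{1,\infty}$ control. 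So the ``crude $H^N$ bound'' you interpolate against is unproved, and it is actually the main work.
\end{itemize}

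\textbf{How the paper avoids all of this.} It lets the fluid do the shearing.
\begin{itemize}
\item $u_0$ is an exact steady 2D Euler vortex in the $(r,z)$-plane around $(\nu^{-1},0)$, so it does not steepen.
\item $\|u_0\|_{H^{\sigma-1}}\lesssim\epsilon$, while $\|\nabla u_0\|_{L^\infty}\simeq\epsilon\lambda^{3-\sigma}\nu^{1/2}\to\infty$ because $\sigma<7/2$.
\item The torus sits at distance $\nu^{-1}\gg\lambda^{-1}$ from the axis. Hence the Hall drift $-2ae_z$ is weaker than the fluid shear by a factor $O(\nu/\lambda)$.
\item The drift is therefore a perturbation, absorbed by the $H^k$ bootstrap for $(w,h)=(u-\bar u,a-\bar a)$ using the weighted quantity $\lambda^{-1}\nu\|D^k w\|_{L^2}+\|D^k h\|_{L^2}$.
\end{itemize}
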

\begin{remark}\label{remark1}
It is interesting to point out that the proof of the theorem relies on the drift of magnetic field by velocity, but the velocity itself looks small. The reason for this is about the choice of the Sobolev space: $\|u\|_{H^{\sigma}}$ is huge although the lower Sobolev norm is small.
\end{remark}
\begin{remark}
Theorem \ref{thm:axis} complement \cite{jeongoh3Dmhd} (inflation of \eqref{eq:hallMHD} when $\sigma > 7/2$) in the low regularity, but the threshold case $\sigma = 7/2$ is still open. Also, Theorem \ref{thm:axis} is complementary to the local well posedness and blowing-up results in \cite{axisym-well-blow}.
\end{remark}

In the end, let us outline the structure of the paper. In Section \ref{sec-data-app}, we will construct an axisymmetric small initial data and an axisymmetric approximate solution whose norm inflates. Then in Section \ref{sec-axi-error}, we show the error between real and approximate solution is controlled.

\subsection*{Acknowledgment}
The author is partially supported by the National Science Foundation grant DMS-2308208. Moreover, he wants to thank Mimi Dai and Xiaotong (Dawson) Yang for reading the proof, catching typos and mistakes.

\section{Preliminaries and Notations}
We may write $f\lesssim g$ if $f \le C g$ up to a constant $C$. The case $f \gtrsim g$ is similar. If $g \lesssim f \lesssim g$, then we will write $f \simeq g$. We will also write $f \ll 1$ if $f \le C \lambda^{-\kappa}$ is controlled by an unchosen large parameter with negative exponent. 

We denote the inhomogeneous and homogeneous Sobolev space in $\mathbb R^3$ by $W^{s,p} = W^{s,p}(\mathbb R^3)$ and $\dot W^{s,p} = \dot W^{s,p}(\mathbb R^3)$ respectively. For the inhomogeneous and homogeneous Sobolev space on the $r$-$z$ plane, we use the notations $W_{r,z}^{s,p} = W^{s,p}(\mathbb R_{r,z}^2)$ and $\dot W_{r,z}^{s,p} = \dot W^{s,p}(\mathbb R_{r,z}^2)$ respectively. Similarly, we denote $L^p = L^p (\mathbb R^3)$ and $L_{r,z}^p = L^p (\mathbb R_{r,z}^2)$. For Sobolev space, we use the notation $H^s := W^{s,2}$.

\section{Initial Data and Approximate Solution}\label{sec-data-app}
Before we start, we point out that our initial data set and approximate solution set is exactly in the same spirit as that of Luo \cite{luo2024}. We set large parameters $\lambda,\nu$ as follows.
\begin{equation}\label{eq:N-nu-lam}
\nu = \lambda^{1- \frac{7/2-\sigma}{100}} \ll \lambda.
\end{equation}
Also, for the smallness of the initial data, let us pick a small factor $\epsilon>0$. We consider the shifted polar coordinates on $(r,z)$ plane where
\[
r = \nu^{-1} + \rho\cos \varphi, \qquad z = \rho \sin \varphi.
\]
\begin{figure}[ht]
 \centering
\begin{tikzpicture}[scale=1.5]

% horizontal axis
\draw[->][line width=1]  (-0.5,-0.5) -- (-0.5,2);
\draw  (-0.5,2) node[anchor=north west] {\scriptsize  $z$};
% vertical axis
\draw[->][line width=1] (-1,0) -- (4.5,0);
 \draw (4.5,0)  node[anchor=north east] {\scriptsize $r$};

\draw[->][color= red  ] (3,0) -- (3.5,0.75);
\draw  (3.25,0.45) node[anchor= east] {  $\color{red} \rho$  };

\filldraw[color= red  ]  (3,0)  circle (0.7pt); 

 \draw[|<->|] (-0.5,-1)--++ (3.5,0) node[pos=0.5,above]{\tiny $\nu^{-1}$};

 \draw[|<->|] (4,-0.9)--++ (0,1.8); 

\draw (4.5, 0.6) node[anchor=north east]{\tiny $\lambda^{-1}$};

 \draw[blue  ,dashed] (3,0) circle (0.9cm);

\draw[color= red  ] ( 3.3,0) arc(10:60:0.3) node[pos=0.5,right]{\tiny $\varphi$};;

%%axis labels
%\filldraw 	(4,0)circle (0pt) node[anchor=south west] {\tiny $r$} ;
%\filldraw 	(0,3)circle (0pt) node[anchor=south west] {\tiny $z$} ;
\end{tikzpicture}
\caption{Schematic of the shifted polar coordinate $(\rho , \varphi)$ in $rz$-plane \cite{luo2024}. }
\label{fig:schematic}
\end{figure}
We set initial data $(u_{0},B_{0}) = (u_{0}^r e_{r} + (u_{0}^z+u_{c}^z) e_z, B_{0}^\theta e_{\theta})$ as below
\begin{equation}\label{ini_data}
\begin{split}
&u_{0}^r(\rho,\varphi) = -\epsilon \lambda^{2-\sigma}  \nu^{1/2} f'(\lambda \rho)\partial_{z}\rho,\\
&u_{0}^z(\rho,\varphi) = \epsilon \lambda^{2-\sigma}  \nu^{1/2} f'(\lambda \rho)\partial_{r}\rho,\\
&u_{c}^z(\rho,\varphi) = \epsilon \lambda^{1-\sigma}  \nu^{1/2} \frac{f(\lambda \rho)}{r},\\
&B_{0}^\theta(\rho,\varphi) = \epsilon \lambda^{1-\sigma}\nu^{1/2}g(\lambda \rho) \cos (\varphi)
\end{split}
\end{equation}
where $f,g \in C_c^\infty(\mathbb{R})$, $g$ is supported on $[2,3]$, $f$ is supported on $[1,4]$, and $f'=1$ on $[2,3]$. Note that the initial data are supported in a torus of radius with radius $\sim\nu^{-1}$, cross section's radius $\sim\lambda^{-1}$, and volume $\sim \lambda^{-2} \nu^{-1}$. It is not hard to check $(u_{0},B_{0})$ satisfies the divergence free property.
\begin{align*}
&\mathrm{div} u_{0} = \partial_{r}u_{0}^r + \frac{u_{0}^r}{r} + \partial_{z }(u_{0}^z+u_{c}^z) = 0, \\
&\mathrm{div} B_{0} = \frac{1}{r}\partial_{\theta} B_{0}^\theta =0.
\end{align*}
And we note $(u_{0}^r,u_{0}^z)$ forms a stationary solution to 2D Euler equation.
\begin{lemma} \label{lem:ini_data}
For $k \ge 0$, $1 \le p \le \infty$
\[
\lVert u_{c} \rVert_{W_{r,z}^{k,p}} \le \epsilon \lambda^{k+1-\sigma} (\lambda^{-1}\nu) (\lambda^{1-2/p}\nu^{1/2}), \quad \lVert u_{c} \rVert_{W^{k,p}} \le \epsilon \lambda^{k+1-\sigma} (\lambda^{-1}\nu) (\lambda^{1-2/p}\nu^{1/2-1/p}),
\]
and
\begin{align*}
&\lVert u_{0} \rVert_{W_{r,z}^{k,p}} \le \epsilon \lambda^{k+1-\sigma} (\lambda^{1-2/p}\nu^{1/2}), \quad \lVert u_{0} \rVert_{W^{k,p}} \le \epsilon \lambda^{k+1-\sigma} (\lambda^{1-2/p}\nu^{1/2-1/p}),\\
&\lVert B_{0} \rVert_{W_{r,z}^{k,p}} \le \epsilon \lambda^{k-\sigma} (\lambda^{1-2/p}\nu^{1/2}), \quad \lVert B_{0} \rVert_{W^{k,p}} \le \epsilon \lambda^{k-\sigma} (\lambda^{1-2/p}\nu^{1/2-1/p}).
\end{align*}
\end{lemma}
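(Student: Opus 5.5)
This is a direct scaling computation in the shifted polar coordinates $(\rho,\varphi)$, done first on the $r$-$z$ plane and then lifted to $\mathbb R^3$. On the support of every function in \eqref{ini_data} we have $\rho\in[\lambda^{-1},4\lambda^{-1}]$, so $r=\nu^{-1}+\rho\cos\varphi\simeq\nu^{-1}$ because $\lambda^{-1}\ll\nu^{-1}$. In Cartesian $(r,z)$ coordinates the building blocks are
\[
\partial_r\rho=\frac{r-\nu^{-1}}{\rho}=\cos\varphi,\qquad \partial_z\rho=\sin\varphi .
\]
These are smooth and homogeneous of degree $0$ in $(r-\nu^{-1},z)$ on the annulus $\rho\simeq\lambda^{-1}$. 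Each $(r,z)$-derivative of $\cos\varphi$, $\sin\varphi$ therefore costs a factor $\rho^{-1}\simeq\lambda$.

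\textbf{Step 1: $u_0$ and $B_0$ on the $r$-$z$ plane.} I write
\[
u_0^r=-\epsilon\lambda^{2-\sigma}\nu^{1/2}F(\lambda(r-\nu^{-1}),\lambda z),\qquad F(x)=f'(|x|)\,\frac{x_2}{|x|}.
\]
Here $F\in C_c^\infty(\mathbb R^2)$, since it is supported in the annulus $1\le|x|\le4$; $u_0^z$ has the same form. Similarly
\[
B_0^\theta=\epsilon\lambda^{1-\sigma}\nu^{1/2}G(\lambda(r-\nu^{-1}),\lambda z),\qquad G(x)=g(|x|)\,\frac{x_1}{|x|}\in C_c^\infty .
\]
Scaling then gives $\|\partial^k F(\lambda\,\cdot)\|_{L^p_{r,z}}\simeq\lambda^{k-2/p}\|\partial^kF\|_{L^p}$. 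Hence
\[
\|u_0\|_{W^{k,p}_{r,z}}\lesssim\epsilon\lambda^{2-\sigma+k-2/p}\nu^{1/2},\qquad \|B_0\|_{W^{k,p}_{r,z}}\lesssim\epsilon\lambda^{1-\sigma+k-2/p}\nu^{1/2}.
\]
These are exactly the claimed bounds; the inhomogeneous norm is dominated by its top-order term since $\lambda\ge1$.

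\textbf{Step 2: $u_c$.} Write $u_c^z=\epsilon\lambda^{1-\sigma}\nu^{1/2}\,H(\lambda(r-\nu^{-1}),\lambda z)\cdot r^{-1}$, with $H(x)=f(|x|)\in C_c^\infty$. On the support, $\partial_r^j(r^{-1})=O(\nu^{j+1})$, and $\nu\ll\lambda$. By Leibniz, every derivative falling on $r^{-1}$ is therefore cheaper than one falling on $H$, and
\[
\|u_c\|_{W^{k,p}_{r,z}}\lesssim\epsilon\lambda^{1-\sigma}\nu^{1/2}\,\nu\,\lambda^{k-2/p}.
\]
This equals $\epsilon\lambda^{k+1-\sigma}(\lambda^{-1}\nu)(\lambda^{1-2/p}\nu^{1/2})$, as claimed.

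\textbf{Step 3: passing to $\mathbb R^3$.} This is the only step needing care, and I expect it to be the main obstacle. The fields are vector fields $e_r,e_z,e_\theta$, and Cartesian derivatives of $e_r$ and $e_\theta$ produce factors of $r^{-1}\simeq\nu\ll\lambda$. Concretely, write $u_0=u_0^r(r,z)(x_1,x_2)/r+\dots$ and $B_0=B_0^\theta(r,z)(-x_2,x_1)/r$. Each Cartesian derivative then hits either the profile, costing $\lambda$, or the angular factors $x_i/r$ and $r^{-1}$, costing $\lesssim\nu$. Since $\nu\le\lambda$, the $r$-$z$ derivative bounds carry over. For the $L^p$ norm, $dx=r\,dr\,d\theta\,dz$ with $r\simeq\nu^{-1}$ on the support. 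This gives
\[
\|\cdot\|_{W^{k,p}(\mathbb R^3)}\simeq\nu^{-1/p}\,\|\cdot\|_{W^{k,p}_{r,z}},
\]
which produces the extra factor $\nu^{-1/p}$ in each 3D bound. For $p=\infty$ no factor appears. Non-integer $k$ is not needed as stated, though the result would follow by interpolation.
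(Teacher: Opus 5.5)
Your proof is correct and follows essentially the same route as the paper. Both arguments use three facts: the profiles live in a disk of radius $\sim\lambda^{-1}$ around $(r,z)=(\nu^{-1},0)$; each derivative costs $\lambda$ on the profile but only $\nu\ll\lambda$ on $1/r$ or the basis vectors; and $r\simeq\nu^{-1}$ on the support, which produces the factor $\nu^{-1/p}$ in $\mathbb R^3$.

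The one difference is minor. You compute $L^p$ norms by exactly rescaling a fixed $C_c^\infty$ profile, whereas the paper bounds $\|D^k\cdot\|_{L^\infty}$ and multiplies by $|\mathrm{supp}|^{1/p}$. Your handling of the $1/r$ factor and of the Cartesian derivatives of $e_r,e_\theta$ is actually more explicit than the paper's.
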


\begin{proof}
It is not hard to see on $(r,z)$ plane, with $r \sim \nu^{-1}$
\[
\lVert u_{c}^z \rVert_{L_{r,z}^\infty} \lesssim \epsilon \lambda^{1-\sigma}(\lambda^{-1}\nu) (\lambda\nu^{1/2}), \quad \lVert u_{0} \rVert_{L_{r,z}^\infty} \lesssim \epsilon \lambda^{1-\sigma} (\lambda\nu^{1/2}), \quad \lVert B_{0} \rVert_{L_{r,z}^\infty} \lesssim \epsilon \lambda^{-\sigma} (\lambda\nu^{1/2}). 
\]
and on $\mathbb R^3$
\[
\lVert u_{c}^z \rVert_{L^\infty} \lesssim \epsilon \lambda^{1-\sigma}(\lambda^{-1}\nu) (\lambda\nu^{1/2}), \quad \lVert u_{0} \rVert_{L^\infty} \lesssim \epsilon \lambda^{1-\sigma} (\lambda\nu^{1/2}), \quad \lVert B_{0} \rVert_{L^\infty} \lesssim \epsilon \lambda^{-\sigma} (\lambda\nu^{1/2}). 
\]
Note in $(r,z)$ plane $|\text{supp}_{r,z}| \sim \lambda^{-2}$ and in $\mathbb R^3$ $|\text{supp}| \sim \lambda^{-2} \nu^{-1}$. Use inequality between $L^p$ norm on finite measure, we have
\[
\begin{split}
&\lVert u_{c}^z \rVert_{L_{r,z}^p} \le |\text{supp}_{r,z}|^{1/p} \lVert u_{c}^z \rVert_{L_{r,z}^\infty} \lesssim \epsilon \lambda^{1-\sigma}(\lambda^{-1}\nu) (\lambda^{1-2/p}\nu^{1/2}), \\
&\lVert u_{0} \rVert_{L_{r,z}^p} \le  |\text{supp}_{r,z}|^{1/p} \lVert u_{0} \rVert_{L_{r,z}^\infty} \lesssim \lambda^{1-\sigma} (\lambda^{1-2/p}\nu^{1/2}),\\
&\lVert B_{0} \rVert_{L_{r,z}^p} \le  |\text{supp}_{r,z}|^{1/p} \lVert b_{0} \rVert_{L_{r,z}^\infty} \lesssim \epsilon \lambda^{-\sigma} (\lambda^{1-2/p}\nu^{1/2}),
\end{split}
\]
and
\[
\begin{split}
&\lVert u_{c}^z \rVert_{L^p} \le |\text{supp}|^{1/p} \lVert u_{c}^z \rVert_{L^\infty} \lesssim \epsilon \lambda^{1-\sigma}(\lambda^{-1}\nu) (\lambda^{1-2/p}\nu^{1/2-1/p}), \\
&\lVert u_{0} \rVert_{L^p} \le  |\text{supp}|^{1/p} \lVert u_{0} \rVert_{L^\infty} \lesssim \lambda^{1-\sigma} (\lambda^{1-2/p}\nu^{1/2-1/p}),\\
&\lVert B_{0} \rVert_{L^p} \le  |\text{supp}|^{1/p} \lVert B_{0} \rVert_{L^\infty} \lesssim \epsilon \lambda^{-\sigma} (\lambda^{1-2/p}\nu^{1/2-1/p}),
\end{split}
\]
Notice in $(\rho,\varphi,\theta)$ coordinate, $\nabla = e_{\rho} \partial_{\rho} + e_{\varphi} \rho^{-1}\partial_{\varphi} + e_{\theta} r^{-1}\partial_\theta \sim e_{\rho} \partial_{\rho} + e_{\varphi} \lambda\partial_{\varphi} + e_{\theta} \nu\partial_\theta$ since $r \sim \nu^{-1}$ and $\rho \sim \lambda^{-1}$. Also note $\partial_\theta$ vanishes for axisymmetric functions. So, for $k \ge 0$,
\begin{align*}
&\lVert D^k u_{c}^z \rVert_{L_{r,z}^\infty} = \lVert D^k u_{c}^z \rVert_{L^\infty} \lesssim \epsilon \lambda^{k+1-\sigma}(\lambda^{-1}\nu) (\lambda\nu^{1/2}), \\
&\lVert D^k u_{0} \rVert_{L_{r,z}^\infty}= \lVert D^k u_{0} \rVert_{L^\infty} \lesssim \epsilon \lambda^{1+k-\sigma} (\lambda\nu^{1/2}), \\
&\lVert D^k B_{0} \rVert_{L_{r,z}^\infty}=\lVert D^k B_{0} \rVert_{L^\infty} \lesssim \epsilon \lambda^{k-\sigma} (\lambda\nu^{1/2}). 
\end{align*}
For $W^{k,p}$ norm with $1\le p < \infty$, a similar multiplication by $|\mathrm{supp}_{r,z}|^{1/p}$ or $|\mathrm{supp}|^{1/p}$ finishes the proof.
\end{proof}

Consider the solution to the approximation system
\begin{equation}\label{pde:approx}
\begin{split}
&\bar{u} \equiv u_{0},\qquad \bar{B}^r = \bar{B}^z \equiv 0\\
& \partial_{t} \bar{B}^\theta + (u_{0}^r \partial_{r} + u_{0}^z \partial_{z})\bar{B}^\theta = 0.
\end{split}
\end{equation}

\begin{lemma}\label{lem:Bbar}
For $t_{*} = \epsilon^{-2}\lambda^{\sigma-3}\nu^{-1/2} $, on $[0,t_{*}]$
\[
\lVert \bar{B}^{\theta} \rVert_{W^{k,p}} \lesssim \epsilon^{1-k} \lambda^{k-\sigma}\lambda^{1-2/p}\nu^{1/2}, \quad \lVert \bar{B}^{\theta} \rVert_{W^{k,p}} \lesssim \epsilon^{1-k} \lambda^{k-\sigma}\lambda^{1-2/p}\nu^{1/2 -1/p},
\]
and when $t = t_*$
\begin{equation*}
\lVert \bar{B}^{\theta} (t_{*}) \rVert_{H^k_{r,z}} \simeq \epsilon^{1-k}\lambda^{k-\sigma}\nu^{1/2}, \qquad \lVert \bar{B}^{\theta} (t_{*}) \rVert_{H^k} \simeq \epsilon^{1-k}\lambda^{k-\sigma}. 
\end{equation*}
\end{lemma}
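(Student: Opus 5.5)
The plan is to solve the transport equation in \eqref{pde:approx} explicitly, using the shifted polar coordinates $(\rho,\varphi)$.

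\textbf{Step 1: the flow of $u_0$.} The velocity $(u_0^r,u_0^z)=\epsilon\lambda^{2-\sigma}\nu^{1/2}f'(\lambda\rho)(-\partial_z\rho,\partial_r\rho)$ is the perpendicular gradient of the stream function $\psi=\epsilon\lambda^{1-\sigma}\nu^{1/2}f(\lambda\rho)$. Its flow therefore preserves $\rho$. Since $|\nabla\rho|=1$ and $\nabla^\perp\rho=e_\varphi$, we have $u_0\cdot\nabla = \epsilon\lambda^{2-\sigma}\nu^{1/2}f'(\lambda\rho)\rho^{-1}\partial_\varphi$. On $\operatorname{supp} g(\lambda\cdot)$, where $\lambda\rho\in[2,3]$ and $f'=1$, this is a differential rotation with angular velocity $\omega(\rho)=\epsilon\lambda^{2-\sigma}\nu^{1/2}\rho^{-1}$. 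Hence
\[
\bar B^\theta(t,\rho,\varphi)=\epsilon\lambda^{1-\sigma}\nu^{1/2}g(\lambda\rho)\cos\bigl(\varphi-\omega(\rho)t\bigr).
\]
At $t=t_*$, writing $\rho=\lambda^{-1}s$ with $s\in[2,3]$, the phase is
\[
\omega t_*=\epsilon\lambda^{2-\sigma}\nu^{1/2}\lambda s^{-1}\cdot\epsilon^{-2}\lambda^{\sigma-3}\nu^{-1/2}=\epsilon^{-1}s^{-1}.
\]
So the solution becomes $\epsilon\lambda^{1-\sigma}\nu^{1/2}g(s)\cos(\varphi-\epsilon^{-1}s^{-1}\tau)$, where $\tau=t/t_*\in[0,1]$.

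\textbf{Step 2: upper bounds.} In the scaled variables $(s,\varphi)$, one $\rho$-derivative costs $\lambda$ times an $s$-derivative. An $s$-derivative hitting the phase produces a factor at most $\epsilon^{-1}s^{-2}\lesssim\epsilon^{-1}$. The angular derivative $\rho^{-1}\partial_\varphi$ costs $\lambda$. Derivatives of $r$-dependent coefficients (through $\nabla$ in cylindrical coordinates and the factor $r\sim\nu^{-1}$) cost only $\nu\ll\lambda$. Taking $\epsilon\le1$, we get
\[
\lVert D^k\bar B^\theta\rVert_{L^\infty}\lesssim \epsilon\lambda^{1-\sigma}\nu^{1/2}(\lambda\epsilon^{-1})^k .
\]
Multiplying by $|\mathrm{supp}_{r,z}|^{1/p}\sim\lambda^{-2/p}$, or by $|\mathrm{supp}|^{1/p}\sim\lambda^{-2/p}\nu^{-1/p}$, exactly as in Lemma \ref{lem:ini_data}, gives both claimed $W^{k,p}$ bounds. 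These hold uniformly for $t\in[0,t_*]$, since the support is invariant and the phase is monotone in $t$. Noninteger $k$ follows by interpolation.

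\textbf{Step 3: lower bound at $t_*$ (the main point).} For integer $k$, the top-order part of $\partial_\rho^k\bar B^\theta$ is
\[
\epsilon\lambda^{1-\sigma}\nu^{1/2}\,g(s)\,(\lambda\epsilon^{-1}s^{-2})^k\cos^{(k)}(\cdot).
\]
Every other term carries at least one fewer power of $\epsilon^{-1}$. Its $L^2_{r,z}$ norm is computed by integrating $\cos^2$ in $\varphi$, which averages to $1/2$, with measure $\rho\,d\rho\,d\varphi\sim\lambda^{-2}ds\,d\varphi$. This gives
\[
\simeq\epsilon^{1-k}\lambda^{k-\sigma}\nu^{1/2}\lVert g\, s^{-2k}\rVert_{L^2(s\,ds)},
\]
and the lower-order terms are smaller by a factor $\epsilon$, so for $\epsilon$ small they are absorbed.

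For $H^k(\mathbb R^3)$ one multiplies by $(2\pi r)^{1/2}\simeq\nu^{-1/2}$, which turns $\nu^{1/2}$ into $1$ and matches the statement. Lower bounds for noninteger $k$ will be the main obstacle. I would get them by interpolating, $\lVert\cdot\rVert_{\dot H^{k}}\le\lVert\cdot\rVert_{\dot H^{k_0}}^{1-\theta}\lVert\cdot\rVert_{\dot H^{k_1}}^{\theta}$, between an integer upper bound and an integer lower bound, with the integer order above or below $k$ as needed. This yields matching powers of $\epsilon$ and $\lambda$, up to $\epsilon$-independent constants.
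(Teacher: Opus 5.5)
Your proof is correct. For the explicit solution and the upper bounds it matches the paper: write $\bar B^\theta=B_0^\theta(\rho,\varphi-\omega(\rho)t)$, bound it in $L^\infty$ with each derivative costing at most $\lambda\epsilon^{-1}$, then multiply by $|\mathrm{supp}|^{1/p}$.

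The lower bound at $t_*$ is where you take a different route. The paper works on the Fourier side. It writes $\bar B^\theta(t_*)=\mathrm{Re}\,(G e^{-i\epsilon^{-1}/\tilde\rho})$ and integrates by parts $M$ times against the phase $-\epsilon^{-1}/\tilde\rho-y\cdot\xi$. On the annulus $\tilde\rho\in[2,3]$ this phase has gradient $\gtrsim\epsilon^{-1}$ whenever $|\xi|<c\epsilon^{-1}$. So the Fourier transform is negligible at low frequencies, the $L^2$ mass sits at $|\xi|\gtrsim\epsilon^{-1}$, and $\|\cdot\|_{\dot H^s}\gtrsim\epsilon^{-s}\|\cdot\|_{L^2}$ follows for every real $s\ge0$ at once.

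You instead compute the leading term of $\partial_\rho^k\bar B^\theta$ in physical space for integer $k$. This gives an explicit constant $\pi\int g^2 s^{1-4k}\,ds$ and shows the remaining terms are relatively $O(\epsilon)$. You then reach fractional orders by interpolation. This is more elementary and avoids the non-stationary-phase bookkeeping, but it leaves two small steps you should state.

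First, you need $\|\partial_\rho^k F\|_{L^2}\lesssim\|F\|_{\dot H^k}$ to turn your computation into a Sobolev lower bound. This holds because $(e_\rho\cdot\nabla)e_\rho=0$, and in 3D also $(e_\rho\cdot\nabla)e_\theta=0$. Hence $\partial_\rho^k$ is a combination of $k$-th order Cartesian derivatives with bounded coefficients and no lower-order terms, which also justifies your factor $(2\pi r)^{1/2}\simeq\nu^{-1/2}$ in $\mathbb R^3$.

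Second, in the interpolation the integer index carrying the lower bound must sit strictly between the fractional index $s$ and another integer carrying an upper bound. For example, for $0<s<1$ use $\|f\|_{\dot H^1}\le\|f\|_{\dot H^s}^{1/(2-s)}\|f\|_{\dot H^2}^{(1-s)/(2-s)}$. Its exponents do return $\epsilon^{1-s}\lambda^{s-\sigma}\nu^{1/2}$ with constants independent of $\epsilon,\lambda,\nu$.

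The paper's approach buys all real orders in one stroke with no interpolation. Yours buys transparency and explicit constants for integer orders.
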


\begin{proof}
Note
\[(u_0^r \partial_r + u_0^z \partial_z) = \omega(\rho) \frac{\partial}{\partial \varphi},\]
where $\omega(\rho) = \frac{\epsilon \lambda^{2-\sigma} \nu^{1/2} f'(\lambda \rho)}{\rho}$.
So in shifted polar coordinate, we could solve $B$ equation in \eqref{pde:approx} explicitly
\begin{equation}\label{eq:abar}
\bar{B}^\theta(t,\rho,\varphi) = B_{0}^\theta(\rho,\varphi - \omega(\rho)t) = \epsilon \lambda^{1-\sigma}\nu^{1/2}g(\lambda \rho) \cos ( \varphi - \frac{\epsilon t \lambda^{2-\sigma} \nu^{1/2} f'(\lambda \rho)}{\rho}).
\end{equation}
which is supported in a torus with radius $\sim \nu^{-1}$ and cross-section radius $\sim \lambda^{-1}$. So, $\frac{\epsilon t \lambda^{2-\sigma} \nu^{1/2} f'(\lambda \rho)}{\rho} \sim \epsilon t \lambda^{3-\sigma}\nu^{1/2} f'(\lambda \rho)$.
Like the proof of Lemma \ref{lem:ini_data} it is not hard to see for $t \in [0,t_*]$
\begin{align*}
\lVert D^k \bar{B}^\theta \rVert_{L_{r,z}^\infty}=\lVert D^k \bar{B}^\theta \rVert_{L^\infty} \lesssim \epsilon \lambda^{1-\sigma}\nu^{1/2} (\lambda\epsilon \lambda^{3-\sigma} \nu^{1/2}t_{*})^k =& \epsilon \lambda^{1-\sigma}\nu^{1/2} (\epsilon^{-1}\lambda)^k \\
=& \epsilon^{1-k}\lambda^{k-\sigma} \lambda \nu^{1/2}
\end{align*}
and with $|\mathrm{supp}_{r,z}| \sim \lambda^{-2}$, $|\mathrm{supp}| \sim \lambda^{-2}\nu^{-1}$,
\begin{equation} \label{eq:atilnorm-upbd}
\begin{split}
&\lVert D^k \bar{B}^\theta \rVert_{L_{r,z}^p} \leq |\mathrm{supp}_{r,z}|^{1/p} \lVert D^k \bar{B}^\theta \rVert_{L_{r,z}^\infty} \lesssim \epsilon^{1-k}\lambda^{k-\sigma} \lambda^{1-2/p} \nu^{1/2}, \\
&\lVert D^k \bar{B}^\theta \rVert_{L^p} \leq |\mathrm{supp}|^{1/p} \lVert D^k \bar{B}^\theta \rVert_{L^\infty} \lesssim \epsilon^{1-k}\lambda^{k-\sigma} \lambda^{1-2/p} \nu^{1/2-1/p}.
\end{split}
\end{equation}
Moreover, note when $t = t_*$,
\begin{equation}\label{eq:zako3}
\begin{split}
\lVert \bar{B}(t_{*}) \rVert_{L_{r,z}^2}^2 &= \epsilon^2 \lambda^{2-2\sigma} \nu \int_{0}^{2\pi} \int_{0}^\infty g^2(\lambda \rho) \cos^2 ( \varphi - \epsilon^{-1}  f'(\lambda \rho)) \rho d\rho d\varphi \\
&= \epsilon^2 \lambda^{-2\sigma} \nu \int_{0}^{2\pi} \int_{0}^\infty g^2(\tilde \rho) \cos^2 ( \varphi - \epsilon^{-1} f'(\tilde \rho)) \tilde \rho d\tilde\rho d\varphi \\
&\simeq \epsilon^2\lambda^{-2\sigma} \nu.
\end{split}
\end{equation}
where we may integrate over $\rho$ from 0 to $\infty$ thanks to compact support, and we set $\tilde \rho = \lambda \rho$. Recall $f'=1$ on the the support of $g$ thus when $t=t_*$ we may write
\[
\bar{B}^\theta (t_{*},\rho,\varphi) = \epsilon \lambda^{1-\sigma}\nu^{1/2} g(\tilde{\rho}) \cos \left( \varphi - \frac{\epsilon^{-1}}{\tilde{\rho}} \right) = \mathrm{Re} \ \epsilon \lambda^{1-\sigma}\nu^{1/2}  g(\tilde{\rho})e^{i\varphi} e^{-i \epsilon^{-1}/ \tilde{\rho} }.
\]
Set $G(\tilde{\rho},\varphi) = \epsilon \lambda^{1-\sigma}\nu^{1/2}g(\tilde{\rho})e^{i\varphi}$. Note on $(\tilde \rho,\varphi)$ plane
\[
\widehat{Ge^{-i\epsilon/\tilde{\rho}}}(\xi) = \int_{\mathbb{R}^2} G(y) e^{i(-\frac{\epsilon^{-1}}{\tilde{\rho}} -y\cdot \xi)} dy.
\]
Take $\psi = -\frac{\epsilon^{-1}}{\tilde{\rho}} -y\cdot \xi$ and $\mathcal L := \frac{\nabla \psi}{i|\nabla \psi|^2}\nabla$, where $\mathcal{L}e^{i\psi} = e^{i\psi}$. Also when $|\xi| < c\epsilon^{-1}$ is small for $c < 1/9$
\[
\lvert \nabla \psi (y)\rvert \ge \epsilon^{-1} \frac{1}{|y|^2}  - |\xi| \ge  \left( \frac{1}{9} - c \right)\epsilon^{-1}.
\]
where we used $G$ is supported on $[2,3]$.
Integration by parts $M \in \mathbb N$ times gives
\[
\lvert \widehat{Ge^{-i\epsilon/\tilde{\rho}}} \rvert \le \lvert \int_{\mathbb{R}^2} G(y) e^{i\psi} \rvert  \leq \lvert \int_{\mathbb{R}^2} e^{i\psi}\mathcal{L}_{*}^M G(y)  \rvert \lesssim \epsilon^{M+1}
\]
where $\mathcal{L}_{*} = \frac{\nabla \psi}{-i|\nabla \psi|^2}\nabla$. So, the $L^2$ mass of $\widehat{Ge^{-i\epsilon/\tilde{\rho}}}$ concentrate on $|\xi| \ge c\epsilon^{-1}$ if $M$ is large:
\[
\int_{|\xi| < c\epsilon^{-1}}  \lvert \widehat{Ge^{-i\epsilon/\tilde{\rho}}} \rvert^2 dy \lesssim \epsilon^{2M+2}.
\]
Then, the $\|Ge^{-i\epsilon/\tilde{\rho}}\|_{\dot H_{r,z}^k}$ is bounded below
\[
\|Ge^{-i\epsilon/\tilde{\rho}}\|_{\dot H_{r,z}^k} \ge \sqrt{  \int_{|\xi|\ge c\epsilon^{-1}} |\xi|^{2k}\lvert \widehat{Ge^{-i\epsilon/\tilde{\rho}}} \rvert^2 dy} \gtrsim \epsilon^{-k} \lVert Ge^{-i\epsilon/\tilde{\rho}} \rVert_{L_{r,z}^2} = \epsilon^{-k} \lVert \bar{B}^\theta(t_{*}, \tilde{\rho},\varphi) \rVert_{L_{r,z}^2} .
\]
In unscaled $(\rho,\varphi)$ coordinate, $\lVert \bar{B}^\theta(t_{*}, \rho,\varphi) \rVert_{L_{r,z}^2} \simeq \epsilon \lambda^{-\sigma}\nu^{1/2}$ due to \eqref{eq:zako3}. Therefore,
\begin{equation*}
\lVert \bar{B}(t_{*}) \rVert_{\dot{H}_{r,z}^{k}} \gtrsim \epsilon^{1-k}\lambda^{k-\sigma} \nu^{1/2}.
\end{equation*}
Together with \eqref{eq:atilnorm-upbd}, we have
\begin{equation*}
\lVert \bar{B}(t_{*}) \rVert_{\dot{H}_{r,z}^{k}}\simeq\epsilon^{1-k}\lambda^{k-\sigma} \nu^{1/2}, \qquad \lVert \bar{B}(t_{*}) \rVert_{\dot{H}^{k}}\simeq\epsilon^{1-k}\lambda^{k-\sigma}.
\end{equation*}
Lastly, we remark for non-integer $k$, the lower bound part proof is not affected and an interpolation yields the upper bound part.
\end{proof}

Next we write the approximation system \eqref{pde:approx} in the form of \eqref{eq:hallMHD} with error terms. Although $u_{0}^r$ and $u_{0}^z$ form a stationary solution to 2D Euler equation in $(r,z)$ plane, our approximate solution $(\bar{u},\bar{B})$ is not a real solution to Hall MHD. Instead, plugging \eqref{pde:approx} in \eqref{pde:axisHallMHD}, we see
\begin{equation}\label{eq:hall-approx-axis}
\begin{split}
&\partial_{t}\bar{u}^r + (\bar{u}^r \partial_r + \bar{u}^z \partial_z) \bar{u}^r + \partial_r \bar{p} + \frac{(\bar{B}^\theta)^2}{r} = \bar{E}_{u}^r, \\[1em]
&\partial_t \bar{u}^z + (\bar{u}^r \partial_r + \bar{u}^z \partial_z) \bar{u}^z  + \partial_z \bar{p} = \bar{E}_{u}^z, \\[1em]
&\partial_t \bar{B}^\theta + (\bar{u}^r \partial_r + \bar{u}^z \partial_z) \bar{B}^\theta - \frac{\bar{u}^r \bar{B}^\theta}{r} - \partial_z \left( \frac{(\bar{B}^\theta)^2}{r} \right) = \bar{E}_{B}^\theta.
\end{split}
\end{equation}
where $\bar p = p_0(r,z)$ and
\begin{equation}\label{eq:Ebardef}
\begin{split}
&\bar{E}_{u}^r = u_{c}^z \partial_{z} u_{0}^r + \frac{(\bar{B}^\theta)^2}{r}, \\
&\bar{E}_{u}^z = u_{0}^r \partial_{r} u_{c}^z + u_{c}^z \partial_{z}u_{0}^z + u_{0}^z \partial_{z}u_{c}^z + u_{c}^z\partial_{z}u_{c}^z,\\
&\bar{E}_{B}^\theta = u_{c}^z \partial_{z} \bar{B}^\theta - \frac{u_{0}^r \bar{B}^\theta}{r} - \partial_z \left( \frac{(\bar{B}^\theta)^2}{r} \right).
\end{split}
\end{equation}
Thus with $\bar{E}_{u} = \bar{E}_u^r e_r + \bar{E}_u^z e_z$ and $\bar{E}_B = \bar{E}_B^\theta e_\theta$, $(\bar{u},\bar{B})$ satisfies
\begin{equation}\label{pde:ubarBbar}
\begin{split}
&\bar{u}_t + ( \bar{u} \cdot \nabla) \bar{u} - (\nabla \times  {\bar{B}}) \times  {\bar{B}} + \nabla \bar{p} = \bar{E}_{u},\\
&\bar{B}_t - \nabla \times ( \bar{u} \times  \bar{B}) +  \nabla \times ((\nabla \times  {\bar{B}}) \times  {\bar{B}}) =  \bar{E}_{B}.
\end{split}
\end{equation}

\begin{lemma}\label{lem:est Ebar}
For $k \ge 0$,
\begin{align*}
&\lVert \bar{E}_{u} \rVert_{\dot H^{k}} \le \lVert \bar{E}_{u}^r \rVert_{\dot{H}{k}} + \lVert \bar{E}_{u}^z \rVert_{\dot{H}^{k}}   \lesssim \lambda^{k+1-\sigma}(\lambda^{-1}\nu) (\lambda^{3-\sigma}\nu^{1/2}),\\
&\lVert \bar{E}_{B} \rVert_{\dot H^{k}} = \lVert \bar{E}_{B}^\theta \rVert_{\dot H^{k}} \lesssim \lambda^{k-\sigma}(\lambda^{-1}\nu) (\lambda^{3-\sigma}\nu^{1/2}).
\end{align*}
\end{lemma}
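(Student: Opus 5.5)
Each term in \eqref{eq:Ebardef} is a product of two explicit functions supported in the torus of major radius $\sim\nu^{-1}$ and cross-section $\sim\lambda^{-1}$. We estimate them one by one with the product rule and the pointwise bounds from the proofs of Lemma \ref{lem:ini_data} and Lemma \ref{lem:Bbar}, in the spirit of those proofs. For integer $k$ we first bound $\|D^k(\cdot)\|_{L^\infty}$ and multiply by $|\mathrm{supp}|^{1/2}\sim\lambda^{-1}\nu^{-1/2}$; non-integer $k$ then follows by interpolation. Throughout, every derivative on $u_0$, $u_c$ or $B_0$ costs at most $\lambda$, and $1/r\sim\nu$ on the support. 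Derivatives falling on $1/r$ cost only $\nu\le\lambda$, so they are harmless.

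\textbf{Velocity error $\bar E_u$.} Consider first the terms involving $u_c^z$, namely $u_c^z\partial_z u_0^r$, $u_0^r\partial_r u_c^z$, $u_c^z\partial_z u_0^z$, $u_0^z\partial_z u_c^z$ and $u_c^z\partial_z u_c^z$. By Lemma \ref{lem:ini_data},
\[
\|D^j u_c\|_{L^\infty}\lesssim\epsilon\lambda^{j+1-\sigma}(\lambda^{-1}\nu)\lambda\nu^{1/2}, \qquad \|D^j u_0\|_{L^\infty}\lesssim\epsilon\lambda^{j+2-\sigma}\nu^{1/2}.
\]
Leibniz's rule therefore gives, for each of these terms,
\[
\|D^k(\cdot)\|_{L^\infty}\lesssim \lambda^{k+1-\sigma}(\lambda^{-1}\nu)\,\lambda\nu^{1/2}\cdot\lambda^{2-\sigma}\nu^{1/2}.
\]
Multiplying by $|\mathrm{supp}|^{1/2}$ removes $\lambda\nu^{1/2}$, which yields $\lambda^{k+1-\sigma}(\lambda^{-1}\nu)(\lambda^{3-\sigma}\nu^{1/2})$; here $\epsilon\le1$ is absorbed. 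The term $u_c^z\partial_zu_c^z$ is smaller by a further factor $\lambda^{-1}\nu$.

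The remaining term is $(\bar B^\theta)^2/r$. Lemma \ref{lem:Bbar} gives $\|D^j\bar B^\theta\|_{L^\infty}\lesssim\epsilon^{1-j}\lambda^{j-\sigma}\lambda\nu^{1/2}$. Hence
\[
\|D^k((\bar B^\theta)^2/r)\|_{L^2}\lesssim \epsilon^{2-k}\,\nu\,\lambda^{k-2\sigma}\lambda\nu^{1/2}.
\]
Compared with the target $\lambda^{k+1-\sigma}\lambda^{-1}\nu\lambda^{3-\sigma}\nu^{1/2}$, the ratio is $\epsilon^{2-k}\lambda^{-2}$, which is $\ll1$ for $\lambda$ large.

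\textbf{Magnetic error $\bar E_B$.} The term $u_c^z\partial_z\bar B^\theta$ is estimated as above: $\bar B$ plays the role of $u_0$ but is smaller by $\lambda^{-1}$. The term $u_0^r\bar B^\theta/r$ carries the factor $\nu$ from $1/r$ in place of $u_c$'s factor $\lambda^{-1}\nu\cdot\lambda$, so it gives exactly $\lambda^{k-\sigma}(\lambda^{-1}\nu)(\lambda^{3-\sigma}\nu^{1/2})$. Finally, $\partial_z((\bar B^\theta)^2/r)$ is the $\bar E_u^r$-type term with one more derivative and is again smaller by $\lambda^{-2}$, up to $\epsilon$ powers.

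\textbf{Main obstacle.} The main point to watch is the $\epsilon^{1-k}$ loss in derivatives of $\bar B^\theta$ on $[0,t_*]$. Each derivative of $\bar B^\theta$ effectively costs $\epsilon^{-1}\lambda$ rather than $\lambda$. So $\epsilon$ is treated as fixed, and the surplus powers of $\lambda^{-1}$ (or $\lambda^{-1}\nu$) absorb the factors $\epsilon^{-k}$ once $\lambda$ is large; the implicit constant in the lemma is allowed to depend on $\epsilon$ and $k$. The statement's suppression of $\epsilon$ signals exactly this.
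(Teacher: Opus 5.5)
Your proposal is correct and follows essentially the paper's argument: term-by-term Leibniz estimates using the bounds of Lemma \ref{lem:ini_data} and Lemma \ref{lem:Bbar}, together with $1/r\sim\nu$ on the support. The paper pairs $L^\infty$ with $L^2$ via H\"older, which is equivalent to your $L^\infty$ bound times $|\mathrm{supp}|^{1/2}$.

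There are two small slips to fix:
\begin{enumerate}
\item In your display for the $u_c$-terms, the factor $\lambda^{2-\sigma}\nu^{1/2}$ should be $\lambda^{3-\sigma}\nu^{1/2}$, because the $\partial_z$ on $u_0$ costs a $\lambda$. As written, your stated conclusion does not follow from that line.
\item The terms $u_c^z\partial_z\bar B^\theta$, $u_0^r\bar B^\theta/r$ and $\partial_z\big((\bar B^\theta)^2/r\big)$ land exactly at the target level for $\bar E_B$, with no surplus power of $\lambda$. Their $\epsilon^{1-k}$ losses are therefore absorbed only by the $\epsilon$-dependent constant you allow, not by taking $\lambda$ large. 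The paper silently drops these $\epsilon$ factors.
\end{enumerate}
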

\begin{proof}
We shall estimate terms in \eqref{eq:Ebardef} one by one and will frequently use Lemma \ref{lem:ini_data} and Lemma \ref{lem:Bbar}. The estimates of $u_{c}^z \partial_{z} u_{0}^r$, $u_{0}^r \partial_{r} u_{c}^z$, $ u_{c}^z \partial_{z}u_{0}^z$, $ u_{0}^z \partial_{z}u_{c}^z$, and $ u_{c}^z\partial_{z}u_{c}^z$ are similar, so we only present the estimate $u_{c}^z \partial_{z} u_{0}^r$ here.
\begin{align*}
\lVert u_{c}^z \partial_{z} u_{0}^r \rVert_{\dot{H}^{k}} \lesssim& \sum_{i=0}^{k} \lVert D^i u_{c}^z \rVert_{L^\infty} \lVert D^{k-i+1}u_{0}^r \rVert_{L^2}  \\
\lesssim& \sum_{i=0}^{k} \lambda^{i+1-\sigma}(\lambda^{-1}\nu)(\lambda \nu^{1/2}) \cdot \lambda^{k-i+2-\sigma}\\
\lesssim&  \lambda^{k+1-\sigma} (\lambda^{-1}\nu) (\lambda^{3-\sigma}\nu^{1/2}).
\end{align*}
Recall $r^{-1} \sim \nu \ll \lambda$, thus
\begin{align*}
\left\lVert  \frac{(\bar{B}^\theta)^2}{r}  \right\rVert_{\dot{H}^{k}} \lesssim \nu  \lVert (\bar{B}^\theta)^2 \rVert_{\dot{H}^{k}} &\lesssim \nu \sum_{i=0}^{k} \lVert D^i \bar{B}^\theta \rVert_{L^\infty} \lVert D^{k-i}\bar{B}^\theta \rVert_{L^2}\\
&\lesssim \nu \sum_{i=0}^{k} \lambda^{i-\sigma}(\lambda \nu^{1/2}) \lambda^{k-i-\sigma}\\
&\lesssim \lambda^{k+1-\sigma} (\lambda^{-1}\nu) (\lambda^{3-\sigma}\nu^{1/2}) \lambda^{-2}\\
&\lesssim \lambda^{k+1-\sigma} (\lambda^{-1}\nu) (\lambda^{3-\sigma}\nu^{1/2}).
\end{align*}
where in the last step we used \eqref{eq:N-nu-lam}. Note we cheat a little since it is possible that derivative acts on $\frac{1}{r}$ and can produce $\frac{1}{r^m}$. However, in that case, each derivative on $\frac{1}{r}$ produces a factor of $\nu \ll \lambda$ thus could be absorbed by estimate above. Therefore, $\lVert \bar{E}_{u} \rVert_{\dot H^{k-1}} \lesssim \lambda^{k-\sigma}(\lambda^{-1}\nu) (\lambda^{3-\sigma}\nu^{1/2})$.

Next,
\begin{align*}
\lVert u_{c}^z \partial_{z} \bar{B}^\theta \rVert_{\dot H^k} \lesssim& \sum_{i=0}^k \lVert D^i u_{c}^z \rVert_{L^\infty} \lVert D^{k-i+1} \bar{B}^\theta \rVert_{L^2}  \\
\lesssim& \sum_{i=0}^k \lambda^{i+1-\sigma}(\lambda^{-1}\nu) \lambda \nu^{1/2} \cdot \lambda^{k-i+1-\sigma}\\
\lesssim& \lambda^{k+3-2\sigma} \nu^{1/2} (\lambda^{-1}\nu) = \lambda^{k-\sigma}  (\lambda^{-1}\nu) (\lambda^{3-\sigma}\nu^{1/2}).
\end{align*}
and
\begin{align*}
\left\lVert \partial_{z} \frac{(\bar{B}^\theta)^2}{r}  \right\rVert_{\dot{H}^{k}} \lesssim \lambda \left\lVert  \frac{(\bar{B}^\theta)^2}{r}  \right\rVert_{\dot{H}^{k}} \lesssim& \lambda^{k-\sigma} (\lambda^{-1}\nu) (\lambda^{3-\sigma}\nu^{1/2}).
\end{align*}
Also by the same reason $r^{-1} \sim \nu$,
\begin{align*}
\left\lVert  \frac{u_{0}^r \bar{B}^\theta}{r}  \right\rVert_{\dot{H}^k} \le \nu \lVert u_{0}^r \bar{B}^\theta \rVert_{\dot{H}^k} \le& \nu\sum_{i=0}^k \lVert D^i u_{0}^r \rVert_{L^\infty} \lVert D^{k-i} \bar{B}^\theta \rVert_{L^2} \\
\lesssim& \nu\sum_{i=0}^k  \lambda^{i+1-\sigma} (\lambda \nu^{1/2}) \cdot \lambda^{k-i-\sigma}\\
\lesssim& \lambda^{k-\sigma} (\lambda^{-1}\nu) (\lambda^{3-\sigma}\nu^{1/2}).
\end{align*}
So, $\lVert \bar{E}_{B} \rVert_{\dot H^{k}} \lesssim \lambda^{k-\sigma}(\lambda^{-1}\nu) (\lambda^{3-\sigma}\nu^{1/2})$.
\end{proof}

\section{Control of Perturbation}\label{sec-axi-error}
Before controlling the difference $u^r - \bar u^r$, $u^z - \bar u^z$ , and $B^\theta - \bar B^\theta$, we make the following observation which would be useful in later estimates.
\begin{lemma}\label{lem-bdd-supp}
If
\[
\lVert u^r \rVert_{L^\infty} \lesssim \lambda^{2-\sigma}\nu^{1/2},\quad \lVert u^z \rVert_{L^\infty} \lesssim \lambda^{2-\sigma}\nu^{1/2}, \quad \left\lVert  \frac{B^\theta}{r}  \right\rVert_{L^\infty}    \lesssim \lambda^{2-\sigma}\nu^{1/2},
\]
then $\forall t \in [0,t_0]$ and $x_{0} \in \mathrm{supp} \ B^\theta(t)$, 
\[
\mathrm{dist} (x_{0}, \{(r,z): r=\nu^{-1}, z=0 \} ) \lesssim \lambda^{-1}.
\]
In other words, 
\[
\mathrm{dist}(\mathrm{supp} \ B^\theta(t), \{(r,z): r=0\}) \simeq \nu^{-1}.
\]
In particular, since $\bar B^\theta (t)$ is supported in a disk of radius $\lambda^{-1}$ centered at $(r,z)= (\nu^{-1},0)$, $\forall t \in [0,t_0]$,
\[
\mathrm{dist}(\mathrm{supp} \ H^\theta(t), \{(r,z): r=0\}) \simeq \nu^{-1}.
\]
\end{lemma}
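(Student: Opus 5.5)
The lemma is a finite speed of propagation statement for the support of $B^\theta$, and I would prove it by following the characteristics of the $B^\theta$ equation in \eqref{pde:axisHallMHD}. First rewrite that equation as a pure transport equation with a source proportional to $B^\theta$:
\[
\partial_t B^\theta + \Big(u^r \partial_r + \big(u^z - \tfrac{2B^\theta}{r}\big)\partial_z\Big) B^\theta = \frac{u^r}{r}B^\theta - \frac{(B^\theta)^2}{r}\,\partial_z\Big(\frac1r\Big)\cdot r .
\]
This form comes from expanding $\partial_z((B^\theta)^2/r)=2\frac{B^\theta}{r}\partial_z B^\theta$, which is exact because $r$ does not depend on $z$. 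So the right-hand side is simply $\frac{u^r}{r}B^\theta$, and the equation reads
\[
\partial_t B^\theta + V\cdot\nabla_{r,z} B^\theta = \frac{u^r}{r}B^\theta, \qquad V := \Big(u^r,\ u^z - \frac{2B^\theta}{r}\Big).
\]

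Next, let $X(t,x)$ be the flow of the smooth field $V$ on the $(r,z)$ half-plane. Along characteristics,
\[
B^\theta(t,X(t,x)) = B_0^\theta(x)\exp\Big(\int_0^t \frac{u^r}{r}(s,X(s,x))\,ds\Big).
\]
Hence $\mathrm{supp}\,B^\theta(t) = X(t,\mathrm{supp}\,B_0^\theta)$. The exponential factor never vanishes, and it stays finite as long as the characteristic remains away from $r=0$. This will be handled by a continuity (bootstrap) argument.

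By hypothesis $|V| \lesssim \lambda^{2-\sigma}\nu^{1/2}$. Every point of $\mathrm{supp}\,B_0^\theta$ lies within $3\lambda^{-1}$ of $(\nu^{-1},0)$, so for $t\le t_0$
\[
|X(t,x)-x| \le t\,\|V\|_{L^\infty} \lesssim t_0\,\lambda^{2-\sigma}\nu^{1/2}.
\]
I take $t_0\le t_* = \epsilon^{-2}\lambda^{\sigma-3}\nu^{-1/2}$. Then the displacement is $\lesssim \epsilon^{-2}\lambda^{-1}$, which is $\lesssim \lambda^{-1}$ for fixed $\epsilon$, with the constant absorbed into $\lesssim$. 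This gives $\mathrm{dist}(x_0,(\nu^{-1},0))\lesssim\lambda^{-1}$. Since $\lambda^{-1}\ll\nu^{-1}$ by \eqref{eq:N-nu-lam}, the support stays at distance $\simeq\nu^{-1}$ from the axis $r=0$. This justifies the bootstrap: the characteristics never approach the singular set $r=0$, so the flow and the exponential factor are well defined on the whole interval.

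The statement about $H^\theta$ is presumably for the difference $H^\theta=B^\theta-\bar B^\theta$. Its support is contained in $\mathrm{supp}\,B^\theta\cup\mathrm{supp}\,\bar B^\theta$. By \eqref{eq:abar}, $\mathrm{supp}\,\bar B^\theta(t)$ lies in the disk of radius $3\lambda^{-1}$ around $(\nu^{-1},0)$, so the same conclusion holds for $H^\theta$.

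The main obstacle is making the characteristic argument rigorous near the symmetry axis. One must ensure the flow stays in $r>0$, where $V$ and $u^r/r$ are smooth, and that the dependence of the constants on $\epsilon$ (the factor $\epsilon^{-2}$ in $t_*$) is harmless. I would handle the first point with the continuity argument on the maximal time for which the characteristics stay in $\{r>\nu^{-1}/2\}$, and the second by treating $\epsilon$ as fixed relative to the large parameter $\lambda$.
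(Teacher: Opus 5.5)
Your proposal is correct and follows the paper's proof: you rewrite the $B^\theta$ equation as a transport equation along $V=u^re_r+(u^z-2B^\theta/r)e_z$ with source $u^rB^\theta/r$, bound the displacement by $t_*\|V\|_{L^\infty}\lesssim\epsilon^{-2}\lambda^{-1}$, and use that $B_0^\theta$ is supported within $3\lambda^{-1}$ of $(\nu^{-1},0)$. Your Lagrangian formula, the continuity argument keeping characteristics away from the axis, and the explicit $\epsilon^{-2}$ bookkeeping (which the paper silently absorbs into $\lesssim$) only add rigor. The extra term in your first display is garbled, with the wrong sign, but it is harmless since $\partial_z(1/r)=0$, as you note right after.
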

\begin{proof}
We first rewrite the $B$-equation in \eqref{pde:axisHallMHD}
\begin{equation}\label{eq:B-trans}
\partial_{t}B^\theta + V \cdot \nabla B^\theta = \frac{u^r B^\theta}{r}, \qquad B^\theta(0) = B^\theta_{0}.
\end{equation}
where $V= u^re_{r} + (u^z - \frac{2B^\theta}{r})e_z$. Note \eqref{eq:B-trans} is a transport equation with characteristics along $V$, and on the characteristics $B^\theta$ evolves under source term $ \frac{u^r B^\theta}{r}$.
Thus from the assumption,
\[
\lVert V \rVert_{L_{r,z}^\infty} \lesssim  \lambda^{2-\sigma}\nu^{1/2}
\]
and before $t_0\le t_{*}= \epsilon^{-2}\lambda^{\sigma-3} \nu^{-1/2}$, the characteristics at most propagate a distance $\sim \lambda^{-1}$. Since $B_{0}^\theta$ \eqref{ini_data} is supported in a disk of radius around $\lambda^{-1}$ centered at $(r,z)=(\nu^{-1},0)$, the lemma is proved.
\end{proof}

Therefore, it is convenient to introduce a new scalar $a := B^\theta/r$ and note $\mathrm{supp} \ a = \mathrm{supp} \ B^\theta$. Then \eqref{pde:axisHallMHD} turns to a more compact form
\begin{equation}\label{pde:ua}
\begin{split}
&\partial_{t} u + u \cdot \nabla u + \nabla p = -ra^2e_{r}, \quad u(0) = u_{0}^r e_{r} + (u_c^z+u_{0}^z) e_{z}, \\
&\partial_{t}a + (u-2ae_{z})\cdot \nabla a = 0, \quad a(0)= a_{0} := \frac{B_{0}^\theta}{r}, \\
&\nabla \cdot u = 0.
\end{split}
\end{equation}
and set $\bar a = \frac{\bar B^\theta}{r}$, whose approximation system is
\begin{equation}\label{pde:ua-bar}
\begin{split}
&\partial_{t} \bar{u} + \bar{u} \cdot \nabla \bar{u} + \nabla \bar{p} = -r\bar{a}^2e_{r} + \bar{E}_{u}, \quad \bar{u}(0) = u(0), \\
&\partial_{t}\bar{a} + (\bar{u}-2\bar{a}e_{z})\cdot \nabla \bar{a} = \frac{\bar{E}_{B}}{r}, \quad \bar{a}(0)= a(0), \\
&\nabla \cdot \bar{u} = 0.
\end{split}
\end{equation}
Although we do not have the divergence-free property of $a$, the $a$-equation is now a pure transport equation thus avoiding loss of derivatives in general 3D Hall MHD \eqref{eq:hallMHD}. Set $w = u - \bar u$, $h = a - \bar a$. Since $w(0)=h(0) = 0$, for a small time $t_{0}>0$ and $0 \le k \le 10$
\begin{equation}\label{boot}
\lVert w(t) \rVert_{H^k} \le 2 \lambda^{k+1-\sigma}(\lambda^{-1}\nu)^{1-\phi}, \quad \lVert h(t) \rVert_{H^k} \le 2 \lambda^{k+1-\sigma}(\lambda^{-1}\nu)^{2-\phi}, \quad \forall t \in [0,t_{0}].
\end{equation}
where $\phi>0$ is a very small constant and it suffices to take $\phi = 0.1$. We control $w$ and $h$ via a bootstrap argument. That is, we shall prove
\begin{equation}\label{boot-goal}
\lVert w(t) \rVert_{H^k} \le  \lambda^{k+1-\sigma}(\lambda^{-1}\nu)^{1-\phi}, \quad \lVert h(t) \rVert_{H^k} \le  \lambda^{k+1-\sigma}(\lambda^{-1}\nu)^{2-\phi}, \quad \forall t \in [0,t_{0}],
\end{equation}
then a continuity argument extends \eqref{boot} to $[0,t_*]$.

Take difference between \eqref{pde:ua} and \eqref{pde:ua-bar},
\begin{equation}\label{pde:wh}
\begin{split}
&\partial_t w +\bar u\cdot\nabla w +w\cdot\nabla\bar u + w\cdot\nabla w +\nabla q +rh(h+2\bar a)e_r = -\bar E_u, \\
&\partial_{t}h + (\bar{u}-2\bar{a}e_{z})\cdot \nabla h  + (w-2he_{z})\cdot \nabla \bar{a} + (w-2he_{z})\cdot \nabla h = -\frac{\bar{E}_{B}}{r}, \\
&\nabla \cdot w = 0.
\end{split}
\end{equation}
where $q = p -\bar p$. Thanks to bounded support of $\bar B^\theta$ such that $r \sim \nu^{-1}$, Lemma \ref{lem:Bbar} implies
\begin{equation}\label{eq:abar-est}
\lVert \bar{a} \rVert_{W^{k,p}} \lesssim \lambda^{k-\sigma}\lambda^{1-2/p}\nu^{3/2-1/p}. 
\end{equation}
It is also not hard to see under \eqref{boot}, the assumption of Lemma \ref{lem-bdd-supp} is satisfied on $[0,t_{0}]$ so $a$ has bounded support near $r \sim \nu^{-1}$: By Sobolev embedding and small $\delta > 0$
\begin{align*}
&\lVert w \rVert_{L^\infty} \lesssim \lVert w \rVert_{H^{3/2+\delta}} \lesssim \lambda^{5/2+\delta-\sigma}(\lambda^{-1}\nu)^{1-\phi} \lesssim \lambda^{2-\sigma}\nu^{1/2},\\
&\lVert h \rVert_{L^\infty} \lesssim \lVert h \rVert_{H^{3/2+\delta}} \lesssim \lambda^{5/2+\delta-\sigma}(\lambda^{-1}\nu)^{2-\phi} \lesssim \lambda^{1-\sigma}\nu^{3/2},
\end{align*}
and by estimate of $\bar u = u_0$ from Lemma \ref{lem:ini_data} and estimate of $\bar a$ \eqref{eq:abar-est},
\begin{align*}
&\lVert u \rVert_{L^\infty} \le \lVert w \rVert_{L^\infty} + \lVert \bar{u} \rVert_{L^\infty}  \lesssim \lambda^{2-\sigma}\nu^{1/2},\\
&\lVert a \rVert_{L^\infty} \le \lVert h \rVert_{L^\infty} + \lVert \bar{a} \rVert_{L^\infty}  \lesssim \lambda^{1-\sigma}\nu^{3/2} < \lambda^{2-\sigma}\nu^{1/2}.
\end{align*}
Lastly, \eqref{boot}, Lemma \ref{lem:ini_data} and \eqref{eq:abar-est} imply that on $[0,t_{0}]$
\begin{equation}\label{eq:ua-est}
\lVert u (t) \rVert_{H^k} \lesssim \lambda^{k+1-\sigma}, \quad \lVert a(t) \rVert_{H^k} \lesssim \lambda^{k-\sigma} \nu.
\end{equation}

Now we start to prove \eqref{boot-goal}, and actually we could prove something stronger.

\begin{lemma}\label{lem-control-error}
Under the bootstrap assumption \eqref{boot}, for $0 \le k \le 10$,
\[
\lVert w \rVert_{H^k} \lesssim \lambda^{k+1-\sigma} (\lambda^{-1}\nu)^{1-\phi+\phi^{k+1}}, \quad \lVert h \rVert_{H^k} \lesssim \lambda^{k+1-\sigma} (\lambda^{-1}\nu)^{2-\phi+\phi^{k+1}}, \quad \forall t \in [0,t_{0}]
\]
\end{lemma}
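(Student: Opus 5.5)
We run energy estimates in $H^k$ on the difference system \eqref{pde:wh} and integrate over $[0,t_0]\subset[0,t_*]$. The key bookkeeping fact is that $t_*=\epsilon^{-2}\lambda^{\sigma-3}\nu^{-1/2}$ is, up to $\epsilon^{-2}$, exactly the reciprocal of the size of $\|\nabla \bar u\|_{L^\infty}\sim \lambda^{3-\sigma}\nu^{1/2}$ (Lemma \ref{lem:ini_data} with $k=1$, $p=\infty$). Hence every term of the form $\|\nabla\bar u\|_{L^\infty}\|w\|_{H^k}$ integrated over $[0,t_*]$ costs only an $O(1)$ factor (absorbing $\epsilon$ as a fixed constant). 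Terms carrying an extra factor $\lambda^{-1}\nu$ or better give a genuine gain.

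\emph{Step 1: the $w$-equation.} Apply $D^k$, pair with $D^k w$, and use incompressibility so the pressure term and $\bar u\cdot\nabla D^k w$ drop out. The remaining terms are handled as follows.
\begin{itemize}
\item Commutator and stretching terms: $[D^k,\bar u\cdot\nabla]w$ and $D^k(w\cdot\nabla\bar u)$ are bounded by $\|\nabla\bar u\|_{L^\infty}\|w\|_{H^k}+\|w\|_{L^\infty}\|\bar u\|_{H^{k+1}}$.
\item Nonlinear term: $w\cdot\nabla w$ is bounded via Kato--Ponce by $\|\nabla w\|_{L^\infty}\|w\|_{H^k}$.
\item Forcing: the Lorentz term $rh(h+2\bar a)$ is treated using $r\sim\nu^{-1}$ on the support (Lemma \ref{lem-bdd-supp}), \eqref{eq:abar-est} and \eqref{boot}; the error $\bar E_u$ uses Lemma \ref{lem:est Ebar}.
\end{itemize}
The term $\|w\|_{L^\infty}\|\bar u\|_{H^{k+1}}$ is the dangerous one. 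For it we interpolate $\|w\|_{L^\infty}$ between $\|w\|_{L^2}$ and $\|w\|_{H^{k}}$, or more simply use the bootstrap bound at level $3/2+\delta$. Either way we obtain a product $\lambda^{3-\sigma}\nu^{1/2}\cdot\lambda^{k+1-\sigma}(\lambda^{-1}\nu)^{1-\phi}\cdot(\text{loss})$, and integrating in time removes the prefactor.

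The forcing $\bar E_u$ contributes $t_*\lambda^{k+1-\sigma}(\lambda^{-1}\nu)\lambda^{3-\sigma}\nu^{1/2}\sim\lambda^{k+1-\sigma}(\lambda^{-1}\nu)$. This is better than the target $(\lambda^{-1}\nu)^{1-\phi+\phi^{k+1}}$ since $\phi^{k+1}<\phi$.

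The feedback from $h$ is $\nu^{-1}\|h\|_{H^k}\|\bar a\|_{W^{0,\infty}}$ plus symmetric terms. This has the size of $\lambda^{k+1-\sigma}(\lambda^{-1}\nu)^{2-\phi}$ times $\nu^{-1}\lambda^{2-\sigma}\nu^{3/2}$ times $t_*$, which is again a power gain of $\lambda^{-1}\nu$ because $\sigma>1$.

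\emph{Step 2: the $h$-equation.} This is a pure transport equation with velocity $\bar u-2\bar a e_z+w-2he_z$, so $D^k$ energy estimates apply in the same way:
\begin{itemize}
\item the top-order transport term integrates away;
\item commutators are bounded by $\|\nabla(\bar u,\bar a,w,h)\|_{L^\infty}\|h\|_{H^k}$ plus $\|h\|_{L^\infty}\|(\bar u,\ldots)\|_{H^{k}}$;
\item the source terms are $(w-2he_z)\cdot\nabla\bar a$ and $\bar E_B/r$.
\end{itemize}
The source $w\cdot\nabla\bar a$ is what forces $h$ to be one factor $\lambda^{-1}\nu$ smaller than $w$. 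Since $\|\nabla\bar a\|\sim\lambda\cdot\lambda^{-\sigma}\nu^{3/2}$, multiplying by $t_*$ converts $\|w\|$ into $\|w\|\cdot(\lambda^{-1}\nu)$ up to constants. This is consistent with the exponent shift from $1-\phi$ to $2-\phi$.

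\emph{Step 3: the gain $\phi^{k+1}$ and the main obstacle.} I would induct on $k$. At $k=0$ all terms gain at least $(\lambda^{-1}\nu)^{\phi}$ relative to \eqref{boot}, giving exponent $1-\phi+\phi$. At level $k$, the only terms without a full gain are the low-high commutator pieces involving $\|w\|_{L^\infty}$ or $\|\nabla w\|_{L^\infty}$ times $\|\bar u\|_{H^{k+1}}$. For these I use the already improved lower-order bounds from the induction together with Sobolev interpolation, and this interpolation degrades the gain from $\phi^{k}$ to $\phi^{k+1}$.

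Gronwall is then applied with rate $\|\nabla\bar u\|_{L^\infty}+\|\nabla w\|_{L^\infty}+\ldots\lesssim t_*^{-1}$, which yields only an $O(1)$ factor $e^{C}$. The choice of $\nu$ in \eqref{eq:N-nu-lam} ensures that the gain beats all polynomial losses. Fractional $k$ is handled by interpolation.

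I expect the delicate point to be the top-order term $\|w\|_{L^\infty}\|\bar u\|_{H^{k+1}}$, together with its $h$-analogue $\|h\|_{L^\infty}\|\bar a\|_{H^{k+1}}$ and the $\nabla h$ coupling in the $a$-transport velocity. There the derivative count exactly matches and only the power of $\lambda^{-1}\nu$ saves us. My first attempt would be to bound $\|w\|_{L^\infty}\le\|w\|_{L^2}^{1-\theta}\|w\|_{H^k}^{\theta}$ with the improved $L^2$ bound.
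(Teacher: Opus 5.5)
\textbf{Gap: the critical linear terms do not close.} In Steps 1 and 2 you bound the terms that are linear in the perturbation, $[D^k,\bar u\cdot\nabla]w$ and $D^k(w\cdot\nabla\bar u)$, by Kato--Ponce as $\|\nabla\bar u\|_{L^\infty}\|w\|_{H^k}+\|w\|_{L^\infty}\|\bar u\|_{H^{k+1}}$. You then control $\|w\|_{L^\infty}$ by the isotropic embedding $H^{3/2+\delta}(\mathbb R^3)\hookrightarrow L^\infty$ or by Gagliardo--Nirenberg. This cannot close.

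Quantify your ``(loss)''. We have $\|w\|_{L^\infty}\lesssim\lambda^{5/2+\delta-\sigma}(\lambda^{-1}\nu)^{1-\phi}$ and $\|\bar u\|_{\dot H^{k+1}}\sim\lambda^{k+2-\sigma}$. Integrating over $[0,t_*]$ with $t_*\simeq\lambda^{\sigma-3}\nu^{-1/2}$ gives
\[
\lambda^{k+1-\sigma}(\lambda^{-1}\nu)^{1-\phi}\cdot\lambda^{1/2+\delta}\nu^{-1/2},
\]
which exceeds the bootstrap bound \eqref{boot} by the factor $(\lambda/\nu)^{1/2}\lambda^{\delta}\gg1$.

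Using improved lower-order bounds in the interpolation does not help. Even granting the full gain $(\lambda^{-1}\nu)^{1}$ at every level, you get $\lambda^{k+1-\sigma}(\lambda^{-1}\nu)^{1/2}$. You need $\lambda^{k+1-\sigma}(\lambda^{-1}\nu)^{1-\phi+\phi^{k+1}}$ with $\phi=0.1$. Taking $\phi\ge 1/2$ instead destroys \eqref{eq:whDooo}, and with it the bound $\|\nabla w\|_{L^\infty}\lesssim\lambda^{3-\sigma}\nu^{1/2}$ that keeps the Gr\"onwall factor $O(1)$.

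So this is not a mild degradation $\phi^k\to\phi^{k+1}$. The choice \eqref{eq:N-nu-lam} cannot absorb it either, because gain and loss are powers of the same ratio $\lambda/\nu$. The same defect hits Step 2 through $\|\nabla h\|_{L^\infty}\|\bar u\|_{H^k}$ and $\|w\|_{L^\infty}\|\bar a\|_{H^{k+1}}$. The reason is geometric: the background lives on a torus of volume $\lambda^{-2}\nu^{-1}$, whose natural $L^\infty/L^2$ ratio is $\lambda\nu^{1/2}$, while the isotropic $\mathbb R^3$ embedding charges $\lambda^{3/2+\delta}$.

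\textbf{Missing idea (the paper's route).} In these critical linear terms, distribute derivatives by Leibniz and always put the explicit background in $L^\infty$ and the perturbation in $L^2$. For example,
\[
\|D^k(w\cdot\nabla\bar u)\|_{L^2}\lesssim\sum_{i=0}^{k}\|D^{i+1}\bar u\|_{L^\infty}\|D^{k-i}w\|_{L^2},
\]
and similarly:
\begin{itemize}
\item $\sum_{i\ge1}\|D^i\bar u\|_{L^\infty}\|D^{k-i+1}w\|_{L^2}$ for the commutator;
\item $\sum_{i\ge1}\|D^i(\bar u-2\bar a e_z)\|_{L^\infty}\|D^{k+1-i}h\|_{L^2}$ in the $h$-equation;
\item $\sum_{i}\|D^{i+1}\bar a\|_{L^\infty}\|D^{k-i}(w-2he_z)\|_{L^2}$ in the $h$-equation.
\end{itemize}
Lemma \ref{lem:ini_data} and \eqref{eq:abar-est} give, directly from the explicit formulas and with no embedding loss, $\|D^{j}\bar u\|_{L^\infty}\lesssim\lambda^{j}\cdot\lambda^{2-\sigma}\nu^{1/2}$ and $\|D^{j+1}\bar a\|_{L^\infty}\lesssim\lambda^{j}\cdot\lambda^{2-\sigma}\nu^{3/2}$.

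Hence every piece not containing $D^kw$ or $D^kh$ is the inverse time scale $\lambda^{3-\sigma}\nu^{1/2}$ times $\lambda^{j}\|D^{k-j}w\|_{L^2}$ (or the analogous $h$ quantity) with $j\ge1$. The $\bar a$ terms carry an extra factor $\lambda^{-1}\nu$. The induction hypothesis applies to these lower-order norms, and this, not interpolation, is where the exponent $\phi^{k}$ comes from.

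The top-order pieces go into Gr\"onwall for $\lambda^{-1}\nu\|D^kw\|_{L^2}+\|D^kh\|_{L^2}$. The lossy embedding \eqref{eq:whDooo} is used only in the quadratic terms $w\cdot\nabla w$, $(w-2he_z)\cdot\nabla h$, $rh^2$, where the extra smallness $(\lambda^{-1}\nu)^{1-\phi}$ with $\phi<1/2$ pays for the $(\lambda/\nu)^{1/2}\lambda^{\delta}$ loss. It also appears in the $h$-coupling inside the $w$-equation, which has a spare factor of about $\lambda^{-2}$.
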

\begin{proof}
From \eqref{boot} and Sobolev embedding, for $0\le m \le 8$ and small $\delta>0$
\begin{equation}\label{eq:whDooo}
\begin{split}
&\lVert D^m w \rVert_{L^\infty} \lesssim \lVert w \rVert_{H^{m+3/2+\delta}} \lesssim \lambda^{m+5/2+\delta-\sigma}(\lambda^{-1}\nu)^{1-\phi} \lesssim \lambda^{m+2-\sigma}\nu^{1/2},\\
&\lVert D^m h \rVert_{L^\infty} \lesssim \lVert h \rVert_{H^{m+3/2+\delta}} \lesssim \lambda^{m+5/2+\delta-\sigma}(\lambda^{-1}\nu)^{2-\phi} \lesssim \lambda^{m+1-\sigma}\nu^{3/2}.
\end{split}
\end{equation}
We will prove by induction, starting from $L^2$ case and inductively proving $\dot H^k$ case.

\noindent $\mathbf{L^2}$ \textbf{case:} Multiply the $w$-equation in \eqref{pde:wh} by $w$, integrate, and apply H\"older inequality
\begin{align*}
\frac{1}{2}\partial_{t}\lVert w \rVert_{L^2}^2 \leq 2\lVert \nabla \bar{u} \rVert_{L^\infty}\lVert w \rVert_{L^2}^2 + \lVert \nabla w \rVert_{L^\infty}\lVert w \rVert_{L^2}^2 + (\lVert r(h+2\bar{a}) \rVert_{L^\infty} \lVert h \rVert_{L^2} + \lVert \bar{E}_{u} \rVert_{L^2}  )\lVert w \rVert_{L^2}, 
\end{align*}
where we do not rely on divergence-free property of $\bar u$ and cancellation of Euler nonlinearity. Although it is doable, we point out it is not necessary. Instead, integration by parts is enough here, which makes later estimate of $h$ possible since we do not have divergence-free property of $a$ or $h$. Divide above by $\|w\|_{L^2}$, 
\[
\partial_{t}\lVert w \rVert_{L^2} \leq (2\lVert \nabla \bar{u} \rVert_{L^\infty}+ \lVert \nabla w \rVert_{L^\infty})\lVert w \rVert_{L^2} + \lVert r(h+2\bar{a}) \rVert_{L^\infty} \lVert h \rVert_{L^2} + \lVert \bar{E}_{u} \rVert_{L^2}.
\]
From Lemma \ref{lem:Bbar} and \eqref{eq:whDooo},
\begin{equation}\label{eq:zako1}
2\lVert \nabla \bar{u} \rVert_{L^\infty}+ \lVert \nabla w \rVert_{L^\infty} \lesssim \lambda^{3-\sigma}\nu^{1/2}.
\end{equation}
Also, apply bounded support of $h$ and $\bar a$ from Lemma \ref{lem-bdd-supp}
\begin{equation}\label{eq:zako2}
\lVert r(h+2\bar{a}) \rVert_{L^\infty} \lesssim \nu^{-1} \lambda^{1-\sigma}\nu^{3/2} \le \lambda^{1-\sigma}\nu^{1/2}.
\end{equation}
Together with Lemma \ref{lem:est Ebar},
\begin{equation}\label{eq:est-w-2}
\partial_{t}\lVert w \rVert_{L^2} \lesssim \lambda^{3-\sigma}\nu^{1/2}\lVert w \rVert_{L^2} + \lambda^{1-\sigma}\nu^{1/2} \lVert h \rVert_{L^2} + \lambda^{1-\sigma}(\lambda^{-1}\nu)(\lambda^{3-\sigma}\nu^{1/2}).
\end{equation}
Similarly, for $h$-equation,
\begin{equation}\label{eq:est-h-2}
\begin{split}
\partial_{t}\lVert h \rVert_{L^2} \le& \lVert \nabla (\bar{u} - 2\bar{a}e_{z}) \rVert_{L^\infty}\lVert h \rVert_{L^2} + \lVert \nabla \bar{a} \rVert_{L^\infty}\lVert w \rVert_{L^2} + 2 \lVert \nabla \bar{a} \rVert_{L^\infty} \lVert h \rVert_{L^2} \\
&+ \lVert \nabla (w-2he_{z}) \rVert_{L^\infty}\lVert h \rVert_{L^2} + \left\lVert  \frac{\bar{E}_{B}}{r}  \right\rVert_{L^2}     \\
\lesssim& \lambda^{3-\sigma}\nu^{1/2}\lVert h \rVert_{L^2} + \lambda^{2-\sigma}\nu^{3/2}\lVert w \rVert_{L^2} + \lambda^{1-\sigma}(\lambda^{-1}\nu)^2(\lambda^{3-\sigma}\nu^{1/2})  
\end{split}
\end{equation}
where from bounded support of approximate solutions and Lemma \ref{lem:est Ebar}, we have $\left\lVert  \frac{\bar{E}_{B}}{r}  \right\rVert_{L^2} \lesssim \nu \|\bar E_B\|_{L^2} \lesssim \lambda^{1-\sigma}(\lambda^{-1}\nu)^2(\lambda^{3-\sigma}\nu^{1/2})  $. Set $W(t) = \lambda^{-1}\nu\|w(t)\|_{L^2}$. Multiply \eqref{eq:est-w-2} by $\lambda^{-1}\nu$ and add \eqref{eq:est-h-2},
\[
\partial_{t} (W + \lVert h \rVert_{L^2} ) \lesssim \lambda^{3-\sigma}\nu^{1/2} W + \lambda^{3-\sigma}\nu^{1/2}\lVert h \rVert_{L^2} + \lambda^{1-\sigma}(\lambda^{-1}\nu)^2(\lambda^{3-\sigma}\nu^{1/2}).
\]
By Gr\"onwall inequality, on $[0,t_{0}]$ with $t_{0}\le t_* \lesssim \lambda^{\sigma-3}\nu^{-1/2}$
\begin{align*}
W(t) + \lVert h(t) \rVert_{L^2} &\lesssim e^{Ct\lambda^{3-\sigma}\nu^{1/2}}t\lambda^{1-\sigma}(\lambda^{-1}\nu)^2(\lambda^{3-\sigma}\nu^{1/2}) \\
&\lesssim \lambda^{1-\sigma}(\lambda^{-1}\nu)^2.
\end{align*}
In other words,
\[
\lVert w(t) \rVert_{L^2} \lesssim \lambda^{1-\sigma}(\lambda^{-1}\nu), \quad \lVert h(t) \rVert_{L^2} \lesssim  \lambda^{1-\sigma}(\lambda^{-1}\nu)^2.
\]

\noindent $\mathbf{\dot H^k}$ \textbf{case:} Assume Lemma \ref{lem-control-error} holds for $H^m$ with $m \le k-1$. Differentiate the $w$-equation in \eqref{pde:wh} to $k$th order, multiply by $D^kw$, integrate, and apply H\"older inequality,
\begin{align*}
\frac{1}{2}\partial_{t} \lVert D^k w \rVert_{L^2}^2 \le& \sum_{i = 1}^k \lVert D^i \bar{u} \rVert_{L^\infty} \lVert D^{k-i+1}w \rVert_{L^2}\lVert D^k w \rVert_{L^2} + \sum_{i = 0}^k \lVert D^{i+1} \bar{u} \rVert_{L^\infty} \lVert D^{k-i}w \rVert_{L^2}\lVert D^k w \rVert_{L^2} \\
&+ \sum_{i = 1}^{\min \{k,8\}} \lVert D^{i} w \rVert_{L^\infty} \lVert D^{k-i+1}w \rVert_{L^2}\lVert D^k w \rVert_{L^2} \\
&+ \sum_{i=\min \{k+1,9\}}^k    \lVert D^i w \rVert_{L^2}  \lVert D^{k-i+1}w \rVert_{L^\infty}\lVert D^k w \rVert_{L^2} \\
&+ \sum_{i=0}^{\min \{k,8\}} \lVert D^i r(h+2\bar{a}) \rVert_{L^\infty} \lVert D^{k-i}h \rVert_{L^2}\lVert D^k w \rVert_{L^2}   \\
&+ \sum_{i=\min \{k+1,9\}}^{k} \lVert D^i r(h+2\bar{a}) \rVert_{L^2} \lVert D^{k-i}h \rVert_{L^\infty}\lVert D^k w \rVert_{L^2}   \\
&+ \lVert \bar{E}_{u} \rVert_{\dot{H}^k} \lVert D^kw \rVert_{L^2},  
\end{align*}
where we decompose the estimate of $\|D^k(w\cdot\nabla w)\|_{L^2}$ and $\|D^kr(h+2\bar{a})h\|_{L^2}$ so that \eqref{eq:whDooo} could still be applied. Divide above by $\|D^k w\|_{L^2}$ and rearrange the summation,
\begin{align*}
&\partial_{t} \lVert D^k w \rVert_{L^2} \\
\le&  (2\lVert D \bar{u} \rVert_{L^\infty} + \lVert D w \rVert_{L^\infty} ) \lVert D^k w \rVert_{L^2} + \lVert r(h+2\bar{a}) \rVert_{L^\infty} \lVert D^k h \rVert_{L^2} + \lVert D^k r(h+2\bar{a}) \rVert_{L^2}\lVert h \rVert_{L^\infty}    \\
&+   \sum_{i = 2}^k \lVert D^i \bar{u} \rVert_{L^\infty} \lVert D^{k-i+1}w \rVert_{L^2} + \sum_{i = 1}^k \lVert D^{i+1} \bar{u} \rVert_{L^\infty} \lVert D^{k-i}w \rVert_{L^2} \\
&+ \sum_{i = 2}^{\min \{k,8\}} \lVert D^{i} w \rVert_{L^\infty} \lVert D^{k-i+1}w \rVert_{L^2} + \sum_{i=\min \{k+1,9\}}^{k-1}    \lVert D^i w \rVert_{L^2}  \lVert D^{k-i+1}w \rVert_{L^\infty}\\
&+ \sum_{i=1}^{\min \{k,8\}} \lVert D^i r(h+2\bar{a}) \rVert_{L^\infty} \lVert D^{k-i}h \rVert_{L^2} + \sum_{i=\min \{k+1,9\}}^{k-1} \lVert D^i r(h+2\bar{a}) \rVert_{L^2} \lVert D^{k-i}h \rVert_{L^\infty}   \\
&+ \lVert \bar{E}_{u} \rVert_{\dot{H}^k}.
\end{align*}
For the top order terms, note from \eqref{eq:whDooo} and \eqref{eq:abar-est}
\begin{align*}
\lVert D^k r(h+2\bar{a}) \rVert_{L^2}\lVert h \rVert_{L^\infty} \lesssim& \nu^{-1} \lVert h \rVert_{L^\infty}  \lVert D^k h \rVert_{L^2} + \nu^{-1}\lVert D^k \bar{a} \rVert_{L^2}\lVert h \rVert_{L^\infty}  \\
\lesssim& \lambda^{1-\sigma}\nu^{1/2}\lVert D^k h \rVert_{L^2} + \lambda^{k-\sigma} \lambda^{1-\sigma}\nu^{3/2}\\
\lesssim& \lambda^{1-\sigma}\nu^{1/2}\lVert D^k h \rVert_{L^2} + \lambda^{k+1-\sigma}(\lambda^{-1}\nu)(\lambda^{3-\sigma}\nu^{1/2})\lambda^{-3}
\end{align*}
where we cheat a little since derivative could be applied to $r$. However, Derivative on $r$ produces a gaining factor of $\nu$ while on $h$ and $\bar a$ its produces a gaining factor $\lambda \gg \nu$ thus the former case is absorbed.
Together with \eqref{eq:zako1} and \eqref{eq:zako2},
\begin{align*}
&(2\lVert D \bar{u} \rVert_{L^\infty} + \lVert D w \rVert_{L^\infty} ) \lVert D^k w \rVert_{L^2} + \lVert r(h+2\bar{a}) \rVert_{L^\infty} \lVert D^k h \rVert_{L^2} + \lVert D^k r(h+2\bar{a}) \rVert_{L^2}\lVert h \rVert_{L^\infty}\\
\lesssim& \lambda^{3-\sigma}\nu^{1/2}\lVert D^kw \rVert_{L^2} + \lambda^{1-\sigma}\nu^{1/2}\lVert D^k h \rVert_{L^2} + \lambda^{k+1-\sigma}(\lambda^{-1}\nu)(\lambda^{3-\sigma}\nu^{1/2})\lambda^{-3}.
\end{align*}
For the other lower order terms, we first notice from Lemma \ref{lem:est Ebar}
\[
\lVert \bar{E}_{u} \rVert_{\dot{H}^k} \lesssim  \lambda^{k+1-\sigma}(\lambda^{-1}\nu)(\lambda^{3-\sigma}\nu^{1/2}).
\]
Also, from $\bar u = u_0$, Lemma \ref{lem:ini_data}, and induction for $m \le k-1$
\begin{align*}
\sum_{i = 2}^k \lVert D^i \bar{u} \rVert_{L^\infty} \lVert D^{k-i+1}w \rVert_{L^2} &\lesssim \sum_{i=2}^k \lambda^{i+2-\sigma}\nu^{1/2}\lambda^{k-i+2-\sigma}(\lambda^{-1}\nu)^{1-\phi+\phi^{k-i+2}} \\
&\lesssim \lambda^{k+1-\sigma}(\lambda^{-1}\nu)^{1-\phi+\phi^{k}}(\lambda^{3-\sigma}\nu^{1/2}).
\end{align*}
and
\begin{align*}
\sum_{i=1}^{\min \{k,8\}} \lVert D^i r(h+2\bar{a}) \rVert_{L^\infty} \lVert D^{k-i}h \rVert_{L^2} &\lesssim \sum_{i=1}^{\min \{k,8\}} \lambda^{i+1-\sigma}\nu^{1/2} \lambda^{k-i+1-\sigma}(\lambda^{-1}\nu)^{2-\phi+\phi^{k-i+1}} \\
&\lesssim \lambda^{k+1-\sigma}(\lambda^{-1}\nu)^{2-\phi+\phi^{k}}(\lambda^{3-\sigma}\nu^{1/2})\lambda^{-2}.
\end{align*}
Others could be estimated similarly and dominated by $\lambda^{k+1-\sigma}(\lambda^{-1}\nu)^{1-\phi+\phi^{k}}(\lambda^{3-\sigma}\nu^{1/2})$. So, combine our estimates,
\begin{equation}\label{eq:w-k}
\partial_{t}\lVert D^k w \rVert_{L^2} \lesssim  \lambda^{3-\sigma}\nu^{1/2}\lVert D^kw \rVert_{L^2} + \lambda^{1-\sigma}\nu^{1/2}\lVert D^k h \rVert_{L^2} + \lambda^{k+1-\sigma}(\lambda^{-1}\nu)^{1-\phi+\phi^{k}}(\lambda^{3-\sigma}\nu^{1/2}).
\end{equation}
For $h$-equation, we use integration and H\"older inequality similarly,
\begin{align*}
\partial_{t}\lVert D^k h \rVert_{L^2} \le& \sum_{i=1}^k \lVert D^i (\bar{u}-2\bar{a}e_{z}) \rVert_{L^\infty} \lVert D^{k+1-i}h \rVert_{L^2} + \sum_{i=0}^k \lVert D^{i+1}\bar{a} \rVert_{L^\infty}\lVert D^{k-i}(w-2he_{z}) \rVert_{L^2}     \\
&+ \sum_{i=1}^{\min\{k,8\}} \lVert D^i (w-2he_{z}) \rVert_{L^\infty} \lVert D^{k+1-i}h \rVert_{L^2}  \\
&+ \sum_{i=\min\{k+1,9\}}^k \lVert D^i (w-2he_{z}) \rVert_{L^2} \lVert D^{k+1-i}h \rVert_{L^\infty} + \left\lVert  \frac{\bar{E}_{B}}{r}  \right\rVert_{\dot{H}^k}.
\end{align*}
Pulling out the top-order terms in the first line, we have
\begin{align*}
\partial_{t}\lVert D^k h \rVert_{L^2}\le& (\lVert D\bar{u} \rVert_{L^\infty}+ 4\lVert D\bar{a} \rVert_{L^\infty} + \lVert Dw \rVert_{L^\infty} + 4 \lVert D h \rVert_{L^\infty}  )\lVert D^kh \rVert_{L^2} \\
&+ (\lVert D \bar{a} \rVert_{L^\infty}+\lVert D h \rVert_{L^\infty})\lVert D^k w \rVert_{L^2}\\
&+\sum_{i=2}^k \lVert D^i (\bar{u}-2\bar{a}e_{z}) \rVert_{L^\infty} \lVert D^{k+1-i}h \rVert_{L^2} + \sum_{i=1}^k \lVert D^{i+1}\bar{a} \rVert_{L^\infty}\lVert D^{k-i}(w-2he_{z}) \rVert_{L^2}     \\
&+ \sum_{i=2}^{\min\{k,8\}} \lVert D^i (w-2he_{z}) \rVert_{L^\infty} \lVert D^{k+1-i}h \rVert_{L^2}  \\
&+ \sum_{i=\min\{k+1,9\}}^{k-1} \lVert D^i (w-2he_{z}) \rVert_{L^2} \lVert D^{k+1-i}h \rVert_{L^\infty} + \left\lVert  \frac{\bar{E}_{B}}{r}  \right\rVert_{\dot{H}^k}
\end{align*}
From $\bar u = u_0$, Lemma \ref{lem:ini_data}, \eqref{eq:abar-est}, and \eqref{eq:whDooo}, the top order terms
\begin{align*}
&(\lVert D\bar{u} \rVert_{L^\infty}+ 4\lVert D\bar{a} \rVert_{L^\infty} + \lVert Dw \rVert_{L^\infty} + 4 \lVert D h \rVert_{L^\infty}  )\lVert D^kh \rVert_{L^2} + (\lVert D \bar{a} \rVert_{L^\infty}+\lVert D h \rVert_{L^\infty})\lVert D^k w \rVert_{L^2}\\
\lesssim& \lambda^{2-\sigma}\nu^{3/2}\lVert D^kw \rVert_{L^2} + \lambda^{3-\sigma}\nu^{1/2}\lVert D^k h \rVert_{L^2}.
\end{align*}
Like in $w$ case, we estimate a few low order terms as examples instead of estimating all of them. From Lemma \ref{lem:est Ebar} and bounded support,
\[
\lVert D^k \frac{\bar{E}_{B}}{r} \rVert_{L^2} \lesssim \nu \lVert D^k \bar{E}_{B} \rVert_{L^2} \lesssim \lambda^{k+1-\sigma} (\lambda^{-1}\nu)^2 (\lambda^{3-\sigma}\nu^{1/2}),  
\]
where the case when the derivative is on $\frac{1}{r}$ is skipped as well, since it produces a smaller gaining factor $\nu$ compared to $\lambda$. From $\bar u = u_0$, Lemma \ref{lem:ini_data}, \eqref{eq:abar-est}, and induction for $m \le k-1$,
\begin{align*}
&\sum_{i=2}^k \lVert D^i (\bar{u}-2\bar{a}e_{z}) \rVert_{L^\infty} \lVert D^{k+1-i}h \rVert_{L^2}  \\
\lesssim& \sum_{i=2}^k (\lambda^{i+2-\sigma}\nu^{1/2}+ 2\lambda^{i+1-\sigma}\nu^{3/2})\lambda^{k+2-i-\sigma}(\lambda^{-1}\nu)^{2-\phi+\phi^{k+2-i}}\\
\lesssim& \lambda^{k+1-\sigma} (\lambda^{-1}\nu)^{2-\phi+\phi^k} (\lambda^{3-\sigma}\nu^{1/2}).
\end{align*}
Other low order terms could be estimated similarly and are dominated by $\lambda^{k+1-\sigma} (\lambda^{-1}\nu)^{2-\phi+\phi^k} (\lambda^{3-\sigma}\nu^{1/2})$. So,
\begin{equation}\label{eq:est-h-k}
\partial_{t}\lVert D^k h \rVert_{L^2} \lesssim \lambda^{2-\sigma}\nu^{3/2}\lVert D^kw \rVert_{L^2} + \lambda^{3-\sigma}\nu^{1/2}\lVert D^k h \rVert_{L^2} + \lambda^{k+1-\sigma} (\lambda^{-1}\nu)^{2-\phi+\phi^k} (\lambda^{3-\sigma}\nu^{1/2}).
\end{equation}
Set $W_{k} = \lambda^{-1}\nu \|D^k w\|_{L^2}$, add $\lambda^{-1}\nu$ times \eqref{eq:w-k} to \eqref{eq:est-h-k},
\[
\partial_{t}(W_{k} + \lVert D^kh \rVert_{L^2} ) \lesssim \lambda^{3-\sigma}\nu^{1/2}W_{k} + \lambda^{3-\sigma}\nu^{1/2}\lVert D^k h \rVert_{L^2} +  \lambda^{k+1-\sigma} (\lambda^{-1}\nu)^{2-\phi+\phi^k} (\lambda^{3-\sigma}\nu^{1/2}),
\]
and by Gr\"onwall inequality on $[0,t_{0}]$,
\begin{align*}
W_{k}(t) + \lVert D^kh(t) \rVert_{L^2} &\lesssim e^{Ct\lambda^{3-\sigma}\nu^{1/2}}t\lambda^{k+1-\sigma}(\lambda^{-1}\nu)^{2-\phi+\phi^k}(\lambda^{3-\sigma}\nu^{1/2}) \\
&\lesssim \lambda^{k+1-\sigma}(\lambda^{-1}\nu)^{2-\phi+\phi^k}.
\end{align*}
Therefore,
\begin{align*}
&\lVert D^k w(t) \rVert_{L^2} \lesssim \lambda^{k+1-\sigma}(\lambda^{-1}\nu)^{1-\phi+\phi^k} < \lambda^{k+1-\sigma}(\lambda^{-1}\nu)^{1-\phi+\phi^{k+1}}, \\
&\lVert D^k h(t) \rVert_{L^2} \lesssim \lambda^{k+1-\sigma}(\lambda^{-1}\nu)^{2-\phi+\phi^k} < \lambda^{k+1-\sigma}(\lambda^{-1}\nu)^{2-\phi+\phi^{k+1}}.
\end{align*}
\end{proof}
Since
\[
\lambda^{k+1-\sigma}(\lambda^{-1}\nu)^{1-\phi+\phi^{k+1}} \ll \lambda^{k+1-\sigma}(\lambda^{-1}\nu)^{1-\phi}
\]
by taking $\lambda$ large, Lemma \ref{lem-control-error} implies \eqref{boot-goal} on $[0,t_{0}]$. Thus the bootstrap is closed and \eqref{boot} holds on $[0,t_{*}]$. 
\begin{proposition}\label{prop=error}
For $0 \le k \le 10$ and $t \in [0,t_{*}]$,
\[
\lVert w (t) \rVert_{\dot{H}^k} \le 2\lambda^{k+1-\sigma}(\lambda^{-1}\nu)^{1-\phi}, \quad \lVert h (t) \rVert_{\dot{H}^k} \le 2\lambda^{k+1-\sigma}(\lambda^{-1}\nu)^{2-\phi}.
\]
Since $B^\theta - \bar{B}^\theta = rh$ and $h$ is supported near $r\sim\nu^{-1}$, we also have
\[
\lVert B^\theta (t) - \bar{B}^\theta(t) \rVert_{H^k} \lesssim  \lambda^{k+1-\sigma}(\lambda^{-1}\nu)^{2-\phi}\nu^{-1}.
\]
\end{proposition}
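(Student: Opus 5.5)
The first assertion of Proposition \ref{prop=error} is the output of the continuity argument just completed, so the plan is to make that argument explicit. Let $T^\dagger$ be the supremum of $t_0 \in [0,t_*]$ such that \eqref{boot} holds on $[0,t_0]$. Since $w(0)=h(0)=0$ and the solution is smooth, the quantities $\|w(t)\|_{H^k}$ and $\|h(t)\|_{H^k}$ are continuous in $t$ and start at $0$. The right-hand sides of \eqref{boot} are strictly positive, so $T^\dagger>0$.

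On $[0,T^\dagger)$ the hypotheses of Lemma \ref{lem-bdd-supp} hold; this was checked just before Lemma \ref{lem-control-error} using Sobolev embedding. Hence $a$ and $h$ stay supported where $r\simeq \nu^{-1}$, and Lemma \ref{lem-control-error} applies. It gives
\[
\|w(t)\|_{H^k} \le C\lambda^{k+1-\sigma}(\lambda^{-1}\nu)^{1-\phi+\phi^{k+1}}
\]
and the analogous bound for $h$. Since $(\lambda^{-1}\nu)^{\phi^{k+1}} = \lambda^{-\phi^{k+1}(7/2-\sigma)/100}\to 0$, and there are only finitely many values $0\le k\le 10$, we can choose $\lambda$ large uniformly in $k$ so that $C(\lambda^{-1}\nu)^{\phi^{k+1}}\le 1$. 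This yields the improved bound \eqref{boot-goal}, with constant $1$ instead of $2$, on $[0,T^\dagger)$.

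By continuity, \eqref{boot-goal} then holds at $T^\dagger$. If $T^\dagger<t_*$, continuity would propagate \eqref{boot} slightly beyond $T^\dagger$, contradicting maximality, so $T^\dagger=t_*$. Since $\dot H^k$ is controlled by $H^k$, the stated homogeneous bounds follow immediately.

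For the $B$-estimate, write $B^\theta-\bar B^\theta = r a - r\bar a = rh$. By Lemma \ref{lem-bdd-supp}, applicable on all of $[0,t_*]$, and the support of $\bar B^\theta$ from Lemma \ref{lem:Bbar}, $h$ is supported in a region where $r\simeq\nu^{-1}$, at distance $\simeq\nu^{-1}$ from the axis. Expand $D^k(rh)$ by the Leibniz rule:
\begin{itemize}
\item the term $rD^kh$ is bounded in $L^2$ by $\nu^{-1}\|D^kh\|_{L^2}$;
\item the only other term is $k\,(Dr)\,D^{k-1}h$, since $D^2r=0$ in the $(r,z)$ variables. Its $L^2$ norm is at most $\|h\|_{H^{k-1}}\lesssim \lambda^{-1}\cdot\lambda^{k+1-\sigma}(\lambda^{-1}\nu)^{2-\phi}$, which is much smaller than $\nu^{-1}\cdot\lambda^{k+1-\sigma}(\lambda^{-1}\nu)^{2-\phi}$ because $\nu\ll\lambda$.
\end{itemize}
Summing over $0\le j\le k$ gives $\|rh\|_{H^k}\lesssim \nu^{-1}\lambda^{k+1-\sigma}(\lambda^{-1}\nu)^{2-\phi}$.

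The main obstacle is justifying $D^2 r = 0$ in three dimensions. In Cartesian coordinates, $\theta$-derivatives of the $e_\theta$ field $B=rh\,e_\theta$ produce factors $1/r\sim\nu$. These are absorbed as in the earlier "cheating" remarks, because near the support each such factor gains $\nu\ll\lambda$. With this, the non-integer case $k$ follows by interpolation.
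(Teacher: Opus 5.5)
Your proposal is correct and follows the paper's route: the bound is the closure of the bootstrap \eqref{boot} using the improved estimates of Lemma \ref{lem-control-error} together with continuity in time, which you make explicit. The $B$-estimate is the paper's one-line observation that $B^\theta-\bar B^\theta = rh$ with $r\lesssim\nu^{-1}$ on the support of $h$, and the ``obstacle'' you flag at the end disappears once you note that in Cartesian coordinates $B-\bar B = h\,(-y,x,0)$ has a linear coefficient field, so your two-term Leibniz expansion is exact on $\mathbb R^3$ and no $1/r$ factors arise.
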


\subsection{Proof of Theorem \ref{thm:axis}}
\begin{proof}
For any $1<\sigma<7/2$, $\epsilon>0$, and $T_{*}>0$, we choose an initial data \eqref{ini_data}. By Lemma \ref{lem:ini_data} and interpolation (in the case $\sigma$ is not an integer),
\[
\lVert u_{0} \rVert_{H^{\sigma-1}} + \lVert B_{0} \rVert_{H^\sigma} \lesssim \epsilon. 
\]
If there exists an axisymmetric solution $(u,B)=(u^re_{r}+u^ze_{z},B^\theta e_{\theta})$ to \eqref{pde:axisHallMHD}, then we construct an approximate solution $(\bar{u},\bar{B})=(u_{0},\bar{B}^\theta e_{\theta})$ as in \eqref{pde:approx}. Lemma \ref{lem:Bbar} and interpolation yield when $t = t_* = \epsilon^{-2}\lambda^{\sigma-3}\nu^{-1/2}$
\[
\lVert \bar B (t_{*}) \rVert_{\dot{H}^\sigma} \simeq  \epsilon^{1-\sigma}.
\]
Note $t_{*}<T_*$ by choosing $\lambda$ sufficiently large since $\sigma < 7/2$. Lastly, Proposition \ref{prop=error} under interpolation implies that given a large $\lambda > 0$,
\begin{align*}
&\lVert u (t) \rVert_{\dot{H}^{\sigma-1}} \le \lVert u_{0} \rVert_{\dot{H}^{\sigma-1}} + \lVert w (t)\rVert_{\dot{H}^{\sigma-1}} \lesssim \epsilon,\quad \forall t \in [0,t_{*}]\\
&\lVert B^\theta (t_{*})\rVert_{\dot{H}^{\sigma}} \ge \lVert \bar{B}^\theta(t_{*}) \rVert_{\dot{H}^\sigma} - \lVert B^\theta(t_{*}) - \bar{B}^\theta(t_{*}) \rVert_{\dot{H}^\sigma} \gtrsim \epsilon^{1-\sigma} - (\lambda^{-1}\nu)^{1-\phi} \gtrsim  \epsilon^{1-\sigma}.
\end{align*}
\end{proof}

\section*{References}

\printbibliography[heading=none]

\end{document}